\documentclass[12pt]{article}
\usepackage{booktabs}
\usepackage{caption}
\usepackage{mathrsfs}
\usepackage{amsmath}
\usepackage{amsfonts,amsthm,amssymb,mathrsfs,bbding}
\usepackage{float}
\usepackage{graphicx}
\captionsetup{%
  figurename=Fig.,
  tablename=Tab.
}

\usepackage{enumerate}
\usepackage{tikz}
\usepackage{caption}
\usepackage{rotating}
\allowdisplaybreaks[4]
\usepackage{tabularx}
\usepackage{cite}
\pagestyle{myheadings} \markright{} \textwidth 150mm \textheight 235mm \oddsidemargin=1cm
\evensidemargin=\oddsidemargin\topmargin=-1.5cm

\newtheorem{thm}{Theorem}[section]
\newtheorem{prob}{Problem}[section]

\newtheorem{lem}{Lemma}[section]
\newtheorem{cor::3.1}{Corollary}[section]

\renewcommand\proofname{\bf Proof}
\addtocounter{section}{0}
\begin{document}

\title{\bf A unified approach to the spectral radius, connectivity and edge-connectivity of graphs }
\author{Yu Wang$^a$,\  Dan Li$^a$, \ Huiqiu Lin$^{a,b}$\thanks{Corresponding author.~~Email address: huiqiulin@126.com (H. Lin)} \\[2mm]
\small\it $^a$College of Mathematics and System Science, Xinjiang
University, \\
\small\it   Urumqi 830017, PR China\\[1mm]
\small\it  $^b$Department of Mathematics, East China University of Science and Technology,  \\
\small\it   Shanghai 200237, PR China}
\date{}
\small{}
\maketitle
{\flushleft\large\bf Abstract}:
For two integers $r\geq 2$ and $h\geq 0$, the \emph{$h$-extra $r$-component connectivity} $\kappa^h_r(G)$ of a graph $G$ is defined to be the minimum size of a subset of vertices whose removal
disconnects $G$, and there are at least $r$ connected components in $G\!-\!S$ and each component has at least $h+1$ vertices. Denote by $\mathcal{G}_{n,\delta}^{\kappa_r^h}$ the set of graphs with $h$-extra $r$-component connectivity $\kappa^h_r(G)$ and minimum degree $\delta$. The following problem concerning spectral radius was proposed by Brualdi and Solheid [On the spectral radius of complementary acyclic matrices of zeros and one,
SIAM J. Algebra Discrete Methods 7 (1986) 265-272]: Given a set of graphs $\mathscr{S}$, find
an upper bound for the spectral radius of graphs in $\mathscr{S}$ and characterize the graphs in which the maximal spectral
radius is attained. 
We study this question for $\mathscr{S}=\mathcal{G}_{n,\delta}^{\kappa_r^h}$ where $r\geq 2$ and $h\geq 0$. Fan, Gu and Lin [$l$-connectivity, $l$-edge-connectivity and spectral radius of graphs, \emph{arXiv}:2309.05247] give the answer to $r\geq 2$ and $h=0$. In this paper, we solve this problem completely for $r\geq 2$ and $h\geq1$. Moreover, we also investigate analogous problems for the edge version. Our results can break the restriction of the extremum structure of the conditional connectivity.  This implies some previous results in connectivity and edge-connectivity.

\maketitle {\flushleft\textit{\bf Keywords}:
Spectral radius; $h$-extra $r$-component connectivity; $h$-extra $r$-component edge-connectivity.}

\section{Introduction}
\vspace{1ex}

We use $\kappa(G)$ and $\lambda(G)$ to represent the
\emph{connectivity} and \emph{edge-connectivity} of a graph $G$, respectively. The connectivity and its generalizations have been studied due to their impact on
the fault tolerance and diagnosability of the interconnection networks\cite{T.L,M.J,L.Z}.  In order to break the restriction of the extremum structure of the conditional
connectivity,
Li, Lan, Ning, Tian, Zhang and Zhu \cite{B.L} introduced the concept of \emph{$h$-extra $r$-component connectivity} of a graph, as an extension of the classic connectivity. For two integers $r\geq 2$ and $h\geq 0$, the $h$-extra $r$-component connectivity $\kappa^h_r(G)$ of a graph $G$ is defined to be the minimum size of a subset of vertices whose removal
disconnects $G$, and there are at least $r$ connected components in $G\!-\!S$ and each component has at least $h+1$ vertices.

For a graph $G$, let $A(G)$ denote the adjacency matrix of $G$ and $\lambda_i(G)$ denote the $i$th largest eigenvalue of $A(G)$.
Particularly, the largest eigenvalue of $A(G)$, denoted by $\rho(G)$, is called the \textit{spectral radius} of $G$.

Connectivity has been well studied from eigenvalue perspectives in the past several decades. In 1973, Fiedler \cite{M.F} provided that $\kappa(G)\geq d-\lambda_2(G)$ in a $d$-regular graph.
Later, Krivelevich and Sudakov \cite{M.K} improved this result to $\kappa(G) \geq d-36 \lambda_2^2 / d$, where $d \leq n/2$.
The following problem concerning spectral radius was proposed by Brualdi and Solheid \cite{R.A}: Given a set of graphs $\mathscr{S}$, find
an upper bound for the spectral radius of graphs in $\mathscr{S}$ and characterize the graphs in which the maximal spectral
radius is attained.
By adding minimum degree $\delta(G)$ of a graph $G$ as a parameter, Lu and Lin \cite{Lu} proved that the maximum spectral radius is obtained uniquely at $K_k\vee(K_{\delta-k+1}\cup K_{n-\delta-1})$ among all graphs with $\kappa(G)\leq k \leq \delta(G)$, which extend the results of Li, Shiu and Chan \cite{J.L}, as well as Ye, Fan and Wang \cite{M.L.}. For more results, we can refer to references \cite{A.A,S.M,Z.M,Lin,S.O.}. Notice that $\kappa^0_2(G)=\kappa(G)$.
Thus, we naturally propose the following problem.

\begin{prob}\label{prob1}
What are the corresponding extremal graphs in $\mathcal{G}_{n,\delta}^{\kappa_r^h}$ with the maximum spectral radius for $r\geq 2$ and $h\geq 0$, where $\mathcal{G}_{n,\delta}^{\kappa_r^h}$ is the set of graphs with $h$-extra $r$-component connectivity $\kappa^h_r$ and minimum degree $\delta$?
\end{prob}

Around this problem, Fan, Gu and Lin\cite{D.F.} done the preliminarily work and gave the answer to $r\geq 2$ and $h=0$. In this paper, we further study the corresponding extremal graphs in $\mathcal{G}_{n,\delta}^{\kappa_r^h}$ for $r\geq 2$ and $h\geq 1$.
It is well-known that $\kappa(G)\geq \delta(G)$. Nevertheless, there is no such bound between $\delta$ and $\kappa^h_r$ for $r \geq 3$.
Denote by  $\vee$ and  $\cup$ the join and union products, respectively. For $\delta\leq \kappa^h_r$, let $G^{\kappa^h_r,\delta}_n$ be the graph obtained from
$K_{\kappa^h_r}\vee(K_{n-\kappa^h_r-(r-1)(h+1)}\cup (r-2)K_{h+1}\cup K_h)\cup K_1$ by adding $\delta-1$ edges between  $K_1$ and $K_{\kappa^h_r}$, and then adding one edge between $K_1$ and $K_h$. For
$\kappa^h_r+1\leq\delta< \kappa^h_r+h$, let $G^{\kappa^h_r,\delta}_n$ be the graph obtained from
$K_{\kappa^h_r}\vee(K_{n-\kappa^h_r-(r-1)(h+1)}\cup (r-2)K_{h+1}\cup K_h\cup K_1)$ by adding $\delta-\kappa^h_r$ edges
between $K_1$ and $K_h$. For $\delta\geq \kappa^h_r+h$, let $G^{\kappa^h_r,\delta}_n \cong
K_{\kappa^h_r} \vee(K_{n-\kappa^h_r-(r-1)(\delta-\kappa^h_r+1)} \cup(r-1) K_{\delta-\kappa^h_r+1})$.

\begin{thm}\label{thm::1.2}
Suppose that $r\geq2$ and $h\geq1$ are two integers. Let $G\in \mathcal{G}_{n,\delta}^{\kappa_r^h}$ where $n\geq \kappa^h_r+r(h+1)$. Then
$$\rho(G)\leq \rho(G^{\kappa^h_r,\delta}_n),$$
 with equality if and only if $G\cong G^{\kappa^h_r,\delta}_n.$
\end{thm}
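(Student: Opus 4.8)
The plan is to take a graph $G\in\mathcal{G}_{n,\delta}^{\kappa^h_r}$ of maximum spectral radius and prove, through a sequence of structure-improving modifications, that $G\cong G^{\kappa^h_r,\delta}_n$. Write $\rho=\rho(G)$ and let $\mathbf{x}=(x_v)_{v\in V(G)}$ be the positive Perron eigenvector of $A(G)$, normalized by $\mathbf{x}^{\top}\mathbf{x}=1$, so that $\rho x_v=\sum_{w\sim v}x_w$ for every $v$. I will use repeatedly the two standard facts: (a) adding an edge to a connected graph strictly increases $\rho$; and (b) if $u$ is a neighbour of a vertex $v$ and $w\neq u$ is a non-neighbour of $v$ with $x_w\geq x_u$, then deleting $vu$ and adding $vw$ does not decrease $\rho$, since the spectral radius of the new graph $G'$ is at least $\mathbf{x}^{\top}A(G')\mathbf{x}=\rho+2x_v(x_w-x_u)$. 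The feature that distinguishes the present setting is that every such modification must be checked to preserve \emph{both} $\kappa^h_r(G)=\kappa^h_r$ and $\delta(G)=\delta$.

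First I would fix a minimum $h$-extra $r$-component cut $S$, so $|S|=\kappa^h_r$ and the components $C_1,\dots,C_m$ of $G-S$ satisfy $m\geq r$ and $|C_i|\geq h+1$. I then argue that, in the extremal $G$, the graph is as dense as these constraints allow: $G[S]$ and every $G[C_i]$ are cliques, $m=r$, and $G-S$ is one ``large'' clique together with $r-1$ cliques of size $\max\{h+1,\delta-\kappa^h_r+1\}$. Densification is legitimate because adding an edge inside $S$, inside some $C_i$, or between $S$ and $\bigcup_i C_i$ leaves $S$ a cut with at least $r$ components of size $\geq h+1$, and hence cannot raise $\kappa^h_r(G)$ above $\kappa^h_r$; and once $G[S]$ is a clique completely joined to $V\setminus S$, any separator $S'$ with $|S'|<\kappa^h_r$ satisfies $S\not\subseteq S'$, and a vertex of $S\setminus S'$ is adjacent in $G-S'$ both to all of the clique $S\setminus S'$ and to all of $(V\setminus S)\setminus S'$, so $G-S'$ is connected --- which shows $\kappa^h_r(G)$ stays exactly $\kappa^h_r$. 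What obstructs a \emph{complete} densification, and what splits $G^{\kappa^h_r,\delta}_n$ into three regimes, is the requirement $\delta(G)=\delta$: one may add any non-edge both of whose endpoints have degree $>\delta$, so in the optimum every remaining non-edge meets a vertex of degree exactly $\delta$, and that lone non-edge is exactly where $G^{\kappa^h_r,\delta}_n$ departs from the shape $K_{\kappa^h_r}\vee(\text{disjoint cliques})$.

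When $\delta\geq\kappa^h_r+h$, the previous step forces $G=K_{\kappa^h_r}\vee(K_{n_1}\cup\cdots\cup K_{n_r})$ with $\sum_i n_i=n-\kappa^h_r$ and $\min_i n_i=\delta-\kappa^h_r+1$ (a smaller clique would drop the minimum degree below $\delta$, a larger minimum would raise it above $\delta$), and a vertex-shifting (Kelmans-type) argument --- or a comparison of the characteristic polynomials of the relevant equitable quotient matrices --- shows $\rho$ is maximized precisely when $r-1$ of the $n_i$ equal $\delta-\kappa^h_r+1$; that graph is $G^{\kappa^h_r,\delta}_n$. When $\delta<\kappa^h_r+h$, one cannot join all of $S$ to $V\setminus S$ without exceeding minimum degree $\delta$; here I would single out a vertex $u$ with $\deg_G(u)=\delta$, show that $G-u$ is fully saturated, namely $G-u\cong K_{\kappa^h_r}\vee(K_{n-\kappa^h_r-(r-1)(h+1)}\cup(r-2)K_{h+1}\cup K_h)$, and then choose the $\delta$ neighbours of $u$ to have maximum total $\mathbf{x}$-weight. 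Since $\mathbf{x}$ is largest on $S$ and on the large clique, but at least one neighbour of $u$ must be placed in the $K_h$-block so that $K_h\cup\{u\}$ becomes a genuine $r$-th component of size $h+1$, the optimum is: all $\kappa^h_r$ vertices of $S$ together with $\delta-\kappa^h_r$ vertices of $K_h$ when $\kappa^h_r<\delta<\kappa^h_r+h$, and $\delta-1$ vertices of $S$ together with one vertex of $K_h$ when $\delta\leq\kappa^h_r$ --- exactly the two remaining descriptions of $G^{\kappa^h_r,\delta}_n$. Carrying the strictness of each step along the way yields uniqueness of the extremal graph.

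I expect the main obstacle to be the step-by-step verification that each densification or rotation simultaneously preserves the \emph{exact} value $\kappa^h_r$ of the $h$-extra $r$-component connectivity and the \emph{exact} minimum degree $\delta$: these two demands conflict, and reconciling them is precisely what forces the three cases in the definition of $G^{\kappa^h_r,\delta}_n$. By contrast, the final comparison of $\rho(G^{\kappa^h_r,\delta}_n)$ with the finitely many competing candidates, via the Perron vector or the quotient matrix, is routine, although it requires a little care when $n$ is close to the threshold $\kappa^h_r+r(h+1)$, where the ``large'' clique is barely larger than the small ones.
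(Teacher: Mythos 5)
Your outline follows essentially the same route as the paper (take the extremal graph, densify relative to a minimum cut $S$, locate a vertex $u$ of degree $\delta$, and split into the three regimes $\delta\le\kappa^h_r$, $\kappa^h_r<\delta<\kappa^h_r+h$, $\delta\ge\kappa^h_r+h$), and your final descriptions of the optimum in each regime agree with $G^{\kappa^h_r,\delta}_n$. But there is a genuine gap exactly where the paper does its real work: you never rule out that the minimum-degree vertex $u$ lies in $S$, nor that it lies in the large component. Your plan jumps to ``show that $G-u$ is fully saturated, namely $G-u\cong K_{\kappa^h_r}\vee(K_{n-\kappa^h_r-(r-1)(h+1)}\cup(r-2)K_{h+1}\cup K_h)$,'' which already presupposes that $u$ sits in a smallest component. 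A priori $u$ could be a vertex of $S$ with only $\delta$ neighbours (all other vertices of $S$ having degree $n-1$), or a vertex of the large clique with $\delta-1$ neighbours in $S$; both configurations are compatible with the degree and cut constraints. Eliminating them is the content of the paper's Claims 1--5 and requires (i) the quantitative Perron-weight inequality $x_{v_{t+1}}\ge x_1$ (weights on $S\setminus N(u)$ dominate weights on $u$'s neighbours inside the components), proved by a chain of eigenvalue-equation estimates using $n_i\ge h+1$ and the relation between $\kappa^h_r$ and $\delta$, and (ii) two-step surgeries that simultaneously rewire $u$ and promote a component vertex into the cut so that the modified graph still has a valid cut of size exactly $\kappa^h_r$ and stays in $\mathcal{G}_{n,\delta}^{\kappa_r^h}$. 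Your heuristic that $\mathbf{x}$ is largest on $S$ and on the large clique does not settle this; if anything it pulls toward attaching $u$ to the large clique, whereas the optimum puts $u$ inside a smallest component, and that comparison (the paper's Claim 3, plus the final step forcing all small components to have size exactly $h+1$) is an explicit eigenvector computation rather than a consequence of monotonicity of $\mathbf{x}$.

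A second, smaller issue concerns your densification step: $\kappa^h_r$ is not monotone under edge addition. Adding an edge $e$ can merge two undersized components of $G-S''$ for some separator $S''$ with $|S''|<\kappa^h_r$, turning $S''$ into a valid $h$-extra $r$-component cut of $G+e$ even though it was not one of $G$; so ``$S$ remains a valid cut'' only bounds $\kappa^h_r(G+e)$ from above. Your lower-bound argument (a surviving vertex of $S$ keeps $G-S'$ connected) applies only once $S$ is already a clique completely joined to $V\setminus S$, not at each intermediate addition. The paper is admittedly terse on this point as well, but a complete write-up should either add all the edges at once and verify membership of the resulting graph in $\mathcal{G}_{n,\delta}^{\kappa_r^h}$ directly, or check that each single addition preserves the exact value of $\kappa^h_r$.
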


 Analogous to the $h$-extra $r$-component
connectivity,
Yang, Zhang and Meng\cite{Y.Y.} proposed the concept of \emph{$h$-extra $r$-component
edge-connectivity} of $G$. For two integers $r\geq 2$ and $h\geq 0$, the $h$-extra $r$-component edge-connectivity $\lambda^h_r(G)$ of a graph $G$ is defined to be the minimum size of a subset of edges whose removal
disconnects $G$, and there are at least $r$ connected components in $G\!-\!S$ and each component has at least $h+1$ vertices.

Much work has been done on the relationship between edge-connectivity and eigenvalues. In 2004, Chandran\cite{S.L.} showed that $\lambda(G)=d$ if $G$ is a $n$-vertex
$d$-regular graph with $\lambda_2(G)<d-1-\frac{d}{n-d}$. Afterwards, Cioab\u{a}
\cite{S.M.} proved that $\lambda(G)\geq k$ if $\lambda_2(G)<d-\frac{2(\lambda-1)}{d+1}$ in a regular graph. Gu, Lai, Li and Yao\cite{Gu} further extended the result of Cioab\u{a} to general graphs, and showed that $\lambda(G)\geq k$ if $\lambda_2(G)<\delta-\frac{2(\lambda-1)}{\delta+1}$ in a graph $G$ with minimum degree $\delta\geq 2\lambda\geq 4$.
Ning, Lu and Wang \cite{W.Ning} started to study edge connectivity of a graph in view of spectral radius, and they conjectured that the graph with the maximum spectral radius among all graphs of order $n$ with minimum degree $\delta$ and edge-connectivity $\lambda$ is  $F^{\lambda}_{n,\delta+1}$, where $F^{\lambda}_{n,\delta+1}$ is the graph obtained from $K_{\delta+1}\cup K_{n-\delta-1}$
by adding $\lambda$ edges joining a vertex in $K_{\delta+1}$ and $\lambda$ vertices in $K_{n-\delta-1}$. Subsequently, Fan, Gu and Lin \cite{D.D.} confirmed this conjecture for $n\geq 2\delta+4$. Furthermore, Fan, Gu and Lin\cite{D.F.} and Wang, Lin and Tian\cite{Y.W} respectively characterized the extremal graphs with the maximum spectral radius among all connected graphs $G$ with $\lambda_{r}^{0}(G)$ and $\lambda_{2}^{1}(G)$. Denote by $\mathcal{B}_{n,\delta}^{\lambda_r^h}$ the set of graphs with $h$-extra $r$-component edge-connectivity $\lambda^h_r$ and minimum degree $\delta$. Motivated by above results, in this paper, we will consider the extremal graphs with the maximum spectral radius in $\mathcal{B}_{n,\delta}^{\lambda_r^h}$, where
$r\geq 2$ and $h\geq \delta$.


Let $B^{\lambda^h_r,\delta}_n$ be the graph obtained from $(K_{n-(r-1)(h+1)-\lambda_r^h+r-2}\vee
K_{\lambda_r^h-r+2})\cup(r-2)K_{h+1}\cup (K_\delta\vee K_{h-\delta})\cup K_1$ by adding $\delta$ edges between $K_1$ and $K_\delta$, and $\lambda_r^h-r+2$ edges between some vertex in a copy of $K_{h+1}$ and all vertices in $K_{\lambda_r^h-r+2}$, and finally adding $r-2$ edges between a vertex in $K_{\lambda_r^h-r+2}$
and a vertex in each remaining $(r-3)$ copy of $K_{h+1}$ and $K_\delta$, respectively (see Fig. 1).
\begin{figure}[http]
\centering
\begin{tikzpicture}[x=1.00mm, y=1.00mm, inner xsep=0pt, inner ysep=0pt, outer xsep=0pt, outer ysep=0pt]
\path[line width=0mm] (-2.00,-2.00) rectangle +(115.11,55.34);
\definecolor{L}{rgb}{0,0,0}
\path[line width=0.24mm, draw=L] (51.45,43.74) ellipse (24.88mm and 4.91mm);
\path[line width=0.24mm, draw=L] (25.91,18.24) ellipse (3.70mm and 12.73mm);
\definecolor{L}{rgb}{0,0,0.502}
\definecolor{F}{rgb}{0,0,0.502}
\path[line width=0.30mm, draw=L, fill=F] (81.99,22.12) circle (0.40mm);
\path[line width=0.30mm, draw=L, fill=F] (31.97,43.52) circle (0.50mm);
\definecolor{L}{rgb}{0.863,0.0784,0.235}
\definecolor{F}{rgb}{0.863,0.0784,0.235}
\path[line width=0.30mm, draw=L, fill=F] (34.81,43.52) circle (0.50mm);
\path[line width=0.30mm, draw=L, fill=F] (37.66,43.52) circle (0.20mm);
\path[line width=0.30mm, draw=L, fill=F] (40.50,43.52) circle (0.20mm);
\path[line width=0.30mm, draw=L, fill=F] (43.34,43.52) circle (0.20mm);
\path[line width=0.30mm, draw=L, fill=F] (25.78,26.74) circle (0.50mm);
\definecolor{L}{rgb}{0,0,0}
\definecolor{F}{rgb}{0,0,0}
\path[line width=0.30mm, draw=L, fill=F] (45.40,19.63) circle (0.40mm);
\path[line width=0.30mm, draw=L, fill=F] (48.25,19.63) circle (0.40mm);
\path[line width=0.30mm, draw=L, fill=F] (50.95,19.63) circle (0.40mm);
\path[line width=0.24mm, draw=L] (38.09,18.04) ellipse (3.70mm and 12.73mm);
\definecolor{L}{rgb}{0,0,0.502}
\definecolor{F}{rgb}{0,0,0.502}
\path[line width=0.30mm, draw=L, fill=F] (37.85,26.68) circle (0.50mm);
\definecolor{L}{rgb}{0,0,0}
\path[line width=0.24mm, draw=L] (72.22,18.28) ellipse (3.70mm and 12.73mm);
\definecolor{L}{rgb}{0,0,0.502}
\path[line width=0.30mm, draw=L, fill=F] (72.07,26.74) circle (0.50mm);
\definecolor{L}{rgb}{0.863,0.0784,0.235}
\definecolor{F}{rgb}{0.863,0.0784,0.235}
\path[line width=0.30mm, draw=L, fill=F] (46.19,43.52) circle (0.50mm);
\definecolor{L}{rgb}{0,0,0}
\definecolor{F}{rgb}{0,0,0}
\path[line width=0.30mm, draw=L, fill=F] (53.06,43.52) circle (0.50mm);
\path[line width=0.30mm, draw=L, fill=F] (56.90,43.63) circle (0.50mm);
\path[line width=0.30mm, draw=L, fill=F] (60.20,43.52) circle (0.20mm);
\path[line width=0.30mm, draw=L, fill=F] (63.04,43.52) circle (0.20mm);
\path[line width=0.30mm, draw=L, fill=F] (65.88,43.52) circle (0.20mm);
\path[line width=0.30mm, draw=L, fill=F] (69.67,43.49) circle (0.50mm);
\path[line width=0.30mm, draw=L, fill=F] (25.64,19.21) circle (0.20mm);
\path[line width=0.30mm, draw=L, fill=F] (25.78,23.05) circle (0.50mm);
\definecolor{L}{rgb}{0,0,0.502}
\definecolor{F}{rgb}{0,0,0.502}
\path[line width=0.30mm, draw=L, fill=F] (71.90,22.43) circle (0.20mm);
\definecolor{L}{rgb}{0,0,0}
\definecolor{F}{rgb}{0,0,0}
\path[line width=0.30mm, draw=L, fill=F] (25.64,16.36) circle (0.20mm);
\path[line width=0.30mm, draw=L, fill=F] (25.62,13.38) circle (0.20mm);
\path[line width=0.30mm, draw=L, fill=F] (25.68,9.82) circle (0.50mm);
\definecolor{L}{rgb}{0,0,0.502}
\definecolor{F}{rgb}{0,0,0.502}
\path[line width=0.30mm, draw=L, fill=F] (71.90,20.46) circle (0.20mm);
\path[line width=0.30mm, draw=L, fill=F] (71.90,18.88) circle (0.20mm);
\path[line width=0.30mm, draw=L, fill=F] (72.07,16.79) circle (0.50mm);
\definecolor{L}{rgb}{0,0,0}
\definecolor{F}{rgb}{0,0,0}
\path[line width=0.30mm, draw=L, fill=F] (72.22,14.37) circle (0.20mm);
\path[line width=0.30mm, draw=L, fill=F] (72.22,12.38) circle (0.20mm);
\path[line width=0.30mm, draw=L, fill=F] (72.07,10.11) circle (0.20mm);
\path[line width=0.30mm, draw=L, fill=F] (72.07,7.40) circle (0.50mm);
\definecolor{L}{rgb}{0.863,0.0784,0.235}
\path[line width=0.24mm, draw=L] (25.78,27.40) -- (31.80,42.96);
\path[line width=0.24mm, draw=L] (25.69,27.08) -- (34.88,43.40);
\path[line width=0.24mm, draw=L] (25.39,26.68) -- (46.10,43.36);
\definecolor{L}{rgb}{0,0,0.502}
\path[line width=0.24mm, draw=L] (31.75,43.36) -- (37.95,26.58);
\path[line width=0.24mm, draw=L] (32.02,43.23) -- (72.17,26.64);
\path[line width=0.24mm, draw=L] (82.04,22.43) -- (72.63,26.68);
\path[line width=0.24mm, draw=L] (72.23,16.63) -- (81.91,22.04);
\definecolor{L}{rgb}{0,0,0}
\path[line width=0.30mm, draw=L, fill=F] (37.70,19.21) circle (0.20mm);
\path[line width=0.30mm, draw=L, fill=F] (37.85,23.05) circle (0.40mm);
\path[line width=0.30mm, draw=L, fill=F] (37.70,16.36) circle (0.20mm);
\path[line width=0.30mm, draw=L, fill=F] (37.69,13.38) circle (0.20mm);
\path[line width=0.30mm, draw=L, fill=F] (37.75,9.82) circle (0.50mm);
\draw(68.48,48.10) node[anchor=base west]{\fontsize{10.33}{12.39}\selectfont $K_{n\!-\!(\!r-\!1)(h\!+\!1)}$};
\draw(22.17,1.72) node[anchor=base west]{\fontsize{10.33}{12.39}\selectfont $K_{h+1}$};
\draw(34.33,1.51) node[anchor=base west]{\fontsize{10.33}{12.39}\selectfont $K_{h+1}$};
\draw(70.42,1.72) node[anchor=base west]{\fontsize{10.33}{12.39}\selectfont $K_{h}$};
\draw(82.96,22.43) node[anchor=base west]{\fontsize{10.33}{12.39}\selectfont $K_{1}$};
\definecolor{L}{rgb}{0.863,0.0784,0.235}
\path[line width=0.18mm, draw=L] (38.93,43.62) ellipse (11.01mm and 2.06mm);
\definecolor{L}{rgb}{0,0,0}
\path[line width=0.30mm, draw=L] (0.00,0.00) ellipse (0.00mm and 0.00mm);
\definecolor{L}{rgb}{0,0,0.502}
\path[line width=0.18mm, draw=L] (72.13,22.06) ellipse (2.51mm and 6.74mm);
\draw(22.67,48.50) node[anchor=base west]{\fontsize{10.33}{12.39}\selectfont $K_{\lambda_r^h\!-\!r\!+\!2}$};
\draw(75.59,28.26) node[anchor=base west]{\fontsize{10.33}{12.39}\selectfont $K_\delta$};
\definecolor{L}{rgb}{0,0,0}
\path[line width=0.24mm, draw=L] (59.15,18.04) ellipse (3.70mm and 12.73mm);
\definecolor{L}{rgb}{0,0,0.502}
\definecolor{F}{rgb}{0,0,0.502}
\path[line width=0.30mm, draw=L, fill=F] (58.91,26.68) circle (0.50mm);
\definecolor{L}{rgb}{0,0,0}
\definecolor{F}{rgb}{0,0,0}
\path[line width=0.30mm, draw=L, fill=F] (58.76,19.21) circle (0.20mm);
\path[line width=0.30mm, draw=L, fill=F] (58.91,23.05) circle (0.50mm);
\path[line width=0.30mm, draw=L, fill=F] (58.76,16.36) circle (0.20mm);
\path[line width=0.30mm, draw=L, fill=F] (58.75,13.38) circle (0.20mm);
\path[line width=0.30mm, draw=L, fill=F] (58.81,9.82) circle (0.50mm);
\draw(55.40,1.51) node[anchor=base west]{\fontsize{10.33}{12.39}\selectfont $K_{h+1}$};
\definecolor{L}{rgb}{0,0,0.502}
\path[line width=0.24mm, draw=L] (31.89,43.10) -- (58.61,26.95);
\definecolor{F}{rgb}{0,0,0.502}
\path[line width=0.30mm, draw=L, fill=F] (72.03,24.54) circle (0.50mm);
\path[line width=0.24mm, draw=L] (82.29,22.09) -- (72.03,24.41);
\end{tikzpicture}%
\begin{center}\small{Fig. 1 : $B^{\lambda_r^h,\delta}_n$}\end{center}
\end{figure}
\begin{thm}\label{thm::1.3}
Suppose that $r\geq2$ and $h\geq 1$ are two integers. Let $G\in \mathcal{B}_{n,\delta}^{\lambda_r^h}$ where $h \geq\delta$ and $n\geq (\lambda_r^h+1)(h+1)^2$. Then
$$\rho(G)\leq \rho(B^{\lambda_r^h,\delta}_n),$$
 with equality if and only if $G\cong
B^{\lambda_r^h,\delta}_n$.
\end{thm}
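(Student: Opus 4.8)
Let $G$ be a graph in $\mathcal{B}_{n,\delta}^{\lambda_r^h}$ of maximum spectral radius $\rho:=\rho(G)$, with positive Perron eigenvector $x$ scaled so that $\max_v x_v=1$, attained at a vertex $u^*$. Since $B^{\lambda_r^h,\delta}_n$ itself lies in $\mathcal{B}_{n,\delta}^{\lambda_r^h}$ (a direct check of its minimum degree and of a minimum $\lambda_r^h$-edge disconnecting set), we have $\rho\ge\rho(B^{\lambda_r^h,\delta}_n)\ge\rho(K_{n-(r-1)(h+1)})=n-(r-1)(h+1)-1$, which together with $n\ge(\lambda_r^h+1)(h+1)^2$ makes $\rho$ large; this yields the eigenvector estimates that the vertices of the "big" piece carry $x$-weight $1-O(\lambda_r^h/n)$ while the vertices of any piece of bounded order carry weight $O(1/n)$, and these estimates drive every exchange below. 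Fix a minimum edge set $F$ whose deletion leaves at least $r$ components of order $\ge h+1$ and set $\lambda:=|F|=\lambda_r^h$. A short minimality argument (if $G-F$ had a component of order $\le h$, or more than $r$ components of order $\ge h+1$, one may restore an edge of $F$) shows that $G$ is connected, that $G-F$ has exactly $r$ components $C_1,\dots,C_r$ with $|C_i|\ge h+1$, and that contracting the $C_i$'s gives a connected multigraph on $r$ vertices, so $\lambda\ge r-1$; we may take $u^*\in C_1$.

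The core is a chain of local modifications. First, edges inside a component are "free": if $a,b$ lie in a common $C_i$, are nonadjacent, and both have degree $>\delta$, then $G+ab$ is again in $\mathcal{B}_{n,\delta}^{\lambda_r^h}$ (the same $F$ keeps $\lambda_r^h\le\lambda$, monotonicity keeps $\lambda_r^h\ge\lambda$, and the minimum degree is unchanged) and $\rho(G+ab)>\rho$, a contradiction; hence each $G[C_i]$ is complete away from its degree-$\delta$ vertices. Next, a relocation argument: if some $C_i$ with $i\ge2$ has order $\ge h+2$, first reroute within $C_i$ the cross edges (if any) incident to a smallest-weight vertex $w\neq v_0$ of $C_i$, then delete $w$ from $C_i$ and reinsert it joined to all of $C_1$; using the weight estimates this strictly increases $\rho$ while keeping $|C_i|\ge h+1$, the minimum degree $\delta$, and $\lambda_r^h=\lambda$, a contradiction. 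So $|C_i|=h+1$ for all $i\ge2$ and $|C_1|=n-(r-1)(h+1)$. A symmetric swap (move a degree-$\delta$ vertex out of $C_1$ and a vertex of a small piece into $C_1$) shows the degree-$\delta$ vertex $v_0$ cannot lie in $C_1$; here $h\ge\delta$ is what gives the small piece room to absorb $v_0$ at degree exactly $\delta$. A standard rotation lemma — moving edges toward vertices of larger eigenvector weight does not decrease $\rho$, and strictly increases it unless the weights coincide — then forces $G[C_1]=K_{n-(r-1)(h+1)}$, forces $G[C_i]=K_{h+1}$ for each $i\ge2$ with $v_0\notin C_i$, and forces the piece containing $v_0$ to be $K_{h+1}$ with the $h-\delta$ non-neighbours of $v_0$ missing, i.e. $(K_\delta\vee K_{h-\delta})\cup K_1$ with $\delta$ edges from $v_0$ to $K_\delta$.

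It remains to place the $\lambda$ cross edges. With the pieces as above, any edge set producing $r$ components each of order $\ge h+1$ must keep every $C_i$ intact: splitting the clique $K_{n-(r-1)(h+1)}$, or a clique $K_{h+1}$, into two parts each of order $\ge h+1$ needs far more than $\lambda$ edges (this is exactly where $n\ge(\lambda_r^h+1)(h+1)^2$ is used), so rerouting cross edges — subject only to keeping the quotient multigraph connected — preserves membership in $\mathcal{B}_{n,\delta}^{\lambda_r^h}$. Applying the rotation lemma in this arena, every cross edge is pushed to have an endpoint in $C_1$, the quotient becomes a star centred at $C_1$, and the $\lambda-(r-1)$ surplus cross edges concentrate at a single vertex $p$ of one $K_{h+1}$-piece other than the $v_0$-piece (forcing $p$ to be joined to $\lambda-r+2$ distinct vertices of $C_1$), while the remaining $r-2$ cross edges each use a fresh small piece and, for the $v_0$-piece, land at a vertex of its $K_\delta$. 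This is precisely $B^{\lambda_r^h,\delta}_n$; since every step either strictly raised $\rho$ or was reversible, $G\cong B^{\lambda_r^h,\delta}_n$.

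The main obstacle is the bookkeeping around $v_0$ (and around any further vertices that happen to share the degree $\delta$): every edge-addition step must avoid them, every vertex-relocation step must neither strand $v_0$ nor shrink its piece below $h+1$, and after each modification one must re-verify that the minimum degree is still exactly $\delta$ and is not accidentally pushed up — this is precisely where the hypotheses $h\ge\delta$ and $n\ge(\lambda_r^h+1)(h+1)^2$ are doing the real work. A secondary difficulty is justifying, in the cross-edge stage, that a reroute which changes which pieces an edge joins cannot create a cheaper disconnecting set; this is handled by the observation above that, once the pieces are cliques and $n$ is large, every minimum disconnecting set must respect the partition $\{C_1,\dots,C_r\}$.
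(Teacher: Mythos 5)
Your overall roadmap (exactly $r$ components after deleting a minimum $F$, small components of order exactly $h{+}1$, the degree-$\delta$ vertex outside the big component, components complete, then optimize the placement of the $\lambda_r^h$ cross edges) matches the paper's, but two of your steps have real gaps. First, the eigenvector estimate you lean on --- ``the vertices of the big piece carry $x$-weight $1-O(\lambda_r^h/n)$'' --- is not available at the point where you invoke it: before $G[C_1]$ is known to be complete, $C_1$ may contain a vertex of degree $\delta$ (possibly $\delta=1$), whose Perron weight is $O(\delta/\rho)$, not close to the maximum. Only the upper bound $x_v\le (h+\lambda_r^h)/\rho$ for vertices in bounded pieces, together with the fact that the maximum-weight vertex lies in $C_1$, is legitimately available, and your relocation and swap arguments would have to be rewritten to use only that. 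Relatedly, you flag but do not resolve the verification that each relocation preserves $\lambda_r^h$ \emph{exactly} (a modified graph always admits a cut of size $\le\lambda_r^h$, but ruling out a strictly cheaper one requires the observation that splitting any of the cliques costs more than $\lambda_r^h$ edges, which you only state in the later cross-edge stage) and preserves minimum degree exactly $\delta$ (deleting $w$ from $C_i$ lowers by one the degree of every neighbour of $w$ in $C_i$, which may include a degree-$\delta$ vertex other than $w$). The paper sidesteps all of this for the steps ``$n_i=h+1$'' and ``$u\notin C_1$'' by a global edge count combined with the Hong--Shu--Fang/Nikiforov bound (its Lemma 3.1): if some small component had order $\ge h+2$, or the degree-$\delta$ vertex sat in $C_1$, the total number of edges would force $\rho(G')<n-(r-1)(h+1)-1$, contradicting $\rho(G')>\rho(K_{n-(r-1)(h+1)})$. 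That route needs no eigenvector estimates and no re-verification of class membership.

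The more serious gap is the final stage. Saying ``applying the rotation lemma in this arena, every cross edge is pushed to have an endpoint in $C_1$ \ldots and the surplus cross edges concentrate at a single vertex $p$ of one $K_{h+1}$-piece'' is precisely the content of the paper's Lemma 3.4, and it does not follow from a one-line application of the edge-rotation lemma: after a reroute one does not know, within a single Perron vector, the ordering of the entries at the competing attachment vertices (e.g.\ $x_{v_1}$ versus $x_{v_{p_l}}$, or $x_{u_2^1}$ versus $x_{u_l^1}$), so Lemma 2.1 cannot be applied directly. The paper instead runs two rounds of the bilinear-form comparison $y^{\top}\bigl(A(G'')-A(G')\bigr)x>0$ between the Perron vectors of \emph{different} graphs, fed by quantitative inequalities such as $x_{v_1}\le x_{v_{p_l}}+\frac{\lambda_r^h-p_l+b_l}{\rho'+1}x_{u_2^1}$ and the two-sided localization $n-(r-1)(h+1)-1<\rho<n-(r-1)(h+1)$ of Lemma 3.3; this is where the hypothesis $n\ge(\lambda_r^h+1)(h+1)^2$ does its real quantitative work, not merely in making cliques expensive to split. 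It also settles why the surplus edges avoid the $v_0$-piece and why the base case $\lambda_r^h=r-1$ behaves differently. As written, your proposal asserts the conclusion of this optimization rather than proving it, so the argument is incomplete at its crux.
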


\section{ Proof of Theorem \ref{thm::1.2}}
For any $v\in V(G)$, let $N_{G}(v)=\{u\mid  uv\in E(G)~\text{and}~u\in V(G)\}$ and $N_{G}[v]=N_{G}(v)\cup\{v\}$.

\begin{lem}[See \cite{H.L}]\label{lem::2.1}
Let $G$ be a connected graph, and let $u$, $v$ be two vertices of $G$. Suppose $\{v_1, v_2, \ldots, v_s\} \subseteq N_{G}(v)
\backslash N_{G}(u)(1 \leq s \leq d_G(v))$ and  $G^*$ is the graph obtained from $G$ by adding the edges $uv_i$ and deleting
the edges $vv_i$, where $1 \leq i \leq s$. If $x_u \geq x_v$, then $\rho(G)<\rho(G^*)$.
\end{lem}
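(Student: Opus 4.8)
The plan is to prove this edge-switching inequality via the Rayleigh quotient evaluated on the Perron eigenvector of $G$. Write $\rho=\rho(G)$ and let $\mathbf{x}$ be the eigenvector of $A(G)$ associated with $\rho$, normalized so that $\mathbf{x}^{\top}\mathbf{x}=1$; here $x_u$ and $x_v$ are precisely the coordinates of $\mathbf{x}$ at $u$ and $v$. Since $G$ is connected, $A(G)$ is irreducible, so the Perron--Frobenius theorem gives $x_w>0$ for every vertex $w$; in particular $\sum_{i=1}^{s}x_{v_i}>0$. I would first record that $u\neq v$ (otherwise $N_G(v)\setminus N_G(u)=\varnothing$ and $s=0$) and $u\notin\{v_1,\dots,v_s\}$, so the switch is clean: $G^{*}$ deletes the $s$ genuinely present edges $vv_i$ and inserts the $s$ genuinely absent edges $uv_i$, and none of the deleted edges is incident with $u$.

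The core computation compares the two quadratic forms on the \emph{same} vector $\mathbf{x}$. A direct expansion of the effect of the switch gives
$$\mathbf{x}^{\top}A(G^{*})\mathbf{x}-\mathbf{x}^{\top}A(G)\mathbf{x}=2\sum_{i=1}^{s}x_u x_{v_i}-2\sum_{i=1}^{s}x_v x_{v_i}=2(x_u-x_v)\sum_{i=1}^{s}x_{v_i}.$$
By the Rayleigh characterization of the largest eigenvalue, $\rho(G^{*})\geq\mathbf{x}^{\top}A(G^{*})\mathbf{x}$, while $\mathbf{x}^{\top}A(G)\mathbf{x}=\rho$. Combining these with the hypothesis $x_u\geq x_v$ and with $\sum_{i}x_{v_i}>0$ yields $\rho(G^{*})\geq\rho$ at once, and in the case $x_u>x_v$ the displayed difference is strictly positive, so $\rho(G^{*})>\rho$ immediately.

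The only delicate point, which I expect to be the main obstacle, is upgrading $\geq$ to the strict inequality in the boundary case $x_u=x_v$, where the quadratic-form difference vanishes. Here I would argue by contradiction: if $\rho(G^{*})=\rho$, then $\mathbf{x}$ attains the Rayleigh maximum for $A(G^{*})$, hence $\mathbf{x}$ is an eigenvector of $A(G^{*})$ for the eigenvalue $\rho$. Evaluating $A(G^{*})\mathbf{x}=\rho\mathbf{x}$ at coordinate $u$ and using $N_{G^{*}}(u)=N_G(u)\cup\{v_1,\dots,v_s\}$ gives
$$\rho x_u=(A(G^{*})\mathbf{x})_u=(A(G)\mathbf{x})_u+\sum_{i=1}^{s}x_{v_i}=\rho x_u+\sum_{i=1}^{s}x_{v_i},$$
forcing $\sum_{i=1}^{s}x_{v_i}=0$, which contradicts the positivity of the Perron coordinates. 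Hence $\rho(G^{*})>\rho(G)$ in this case as well. I would emphasize that connectivity of $G^{*}$ is never invoked: the Rayleigh bound and its equality condition hold for an arbitrary real symmetric matrix, so the only structural input needed is the strict positivity of $\mathbf{x}$, which follows from the connectivity of $G$ alone. This completes the proof.
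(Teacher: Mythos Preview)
The paper does not supply its own proof of this lemma; it is quoted from \cite{H.L} and used as a black box. Your Rayleigh-quotient argument is the standard proof and is correct, including the handling of the boundary case $x_u=x_v$: if equality held, $\mathbf{x}$ would maximize the Rayleigh quotient for $A(G^{*})$ and hence be an eigenvector of $A(G^{*})$, and the eigenvalue equation at $u$ then forces $\sum_i x_{v_i}=0$, contradicting Perron--Frobenius positivity. One small remark: the assertion $u\notin\{v_1,\dots,v_s\}$ is not a consequence of the stated hypothesis (if $uv\in E(G)$ then $u\in N_G(v)\setminus N_G(u)$), but it is implicit in the requirement that $G^{*}$ be a simple graph; this is an ambiguity in the lemma's phrasing rather than a flaw in your argument.
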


\begin{lem}[See \cite{B.S.}]\label{lem::2.2}
Let $u$, $v$ be two distinct vertices of a connected graph $G$, and let $x$ be the Perron vector of $A(G)$.

\noindent (i) If $N_G(v)\backslash\{u\}\subset N_G(u)\backslash\{v\}$, then $x_u> x_v$.

\noindent (ii) If $N_G(v)\subseteq N_G[u]$ and $N_G(u)\subseteq N_G[v]$, then $x_u=x_v$.
\end{lem}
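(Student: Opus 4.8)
The plan is to work directly from the Perron--Frobenius eigenequation, using no tool beyond the positivity of the Perron vector. Write $\rho=\rho(G)$ for the spectral radius; since $G$ is connected on at least two vertices it carries an edge, so $\rho>0$ and $x$ is entrywise positive. Reading off the $w$-th coordinate of $A(G)x=\rho x$ gives the local relation $\rho x_w=\sum_{z\in N_G(w)}x_z$ for every vertex $w$. Applying this to $u$ and to $v$ and subtracting yields $\rho(x_u-x_v)=\sum_{z\in N_G(u)}x_z-\sum_{z\in N_G(v)}x_z$, and both parts will follow by controlling the right-hand side through the hypotheses, the sign of each leftover summand being forced by the positivity of $x$.

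First I would dispose of the case $uv\notin E(G)$. Here $u\notin N_G(v)$ and $v\notin N_G(u)$, so $N_G(v)\backslash\{u\}=N_G(v)$ and $N_G(u)\backslash\{v\}=N_G(u)$. For (i) the hypothesis becomes the strict inclusion $N_G(v)\subset N_G(u)$, whence $\rho(x_u-x_v)=\sum_{z\in N_G(u)\backslash N_G(v)}x_z>0$ because the difference set is nonempty and every $x_z>0$; thus $x_u>x_v$. For (ii), combining $N_G(v)\subseteq N_G[u]$ with $u\notin N_G(v)$ gives $N_G(v)\subseteq N_G(u)$, and symmetrically $N_G(u)\subseteq N_G(v)$, so $N_G(u)=N_G(v)$, the two eigenequations are identical, and $x_u=x_v$.

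The delicate case is $uv\in E(G)$, where $u\in N_G(v)$ and $v\in N_G(u)$ create cross terms. Splitting these off, $\sum_{z\in N_G(u)}x_z=\sum_{z\in N_G(u)\backslash\{v\}}x_z+x_v$ and likewise for $v$, so the subtracted eigenequations rearrange to $(\rho+1)(x_u-x_v)=\sum_{z\in N_G(u)\backslash\{v\}}x_z-\sum_{z\in N_G(v)\backslash\{u\}}x_z$. This identity is the heart of the matter: it absorbs the adjacency contribution into the positive coefficient $\rho+1$ and reduces everything to comparing the two punctured neighborhoods. For (i) the strict inclusion $N_G(v)\backslash\{u\}\subset N_G(u)\backslash\{v\}$ makes the right-hand side a positive sum, giving $x_u>x_v$. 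For (ii) I would verify that $N_G(v)\subseteq N_G[u]$ and $N_G(u)\subseteq N_G[v]$ force $N_G(u)\backslash\{v\}=N_G(v)\backslash\{u\}$: any $z\in N_G(u)\backslash\{v\}$ lies in $N_G[v]$ yet differs from $v$, hence lies in $N_G(v)$, and $z\neq u$, so $z\in N_G(v)\backslash\{u\}$, with the reverse inclusion symmetric; the right-hand side then vanishes and $x_u=x_v$.

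The only point requiring genuine care, rather than routine bookkeeping, is the treatment of the cross terms in the adjacent case: one must remember that $x_u$ and $x_v$ themselves sit on the \emph{opposite} side of the subtracted equations, which is exactly what upgrades the coefficient from $\rho$ to $\rho+1$ and keeps the sign analysis clean. Once that shift is in place the positivity of the Perron vector does all the remaining work, and no further spectral machinery (interlacing, edge perturbation, or the like) is needed.
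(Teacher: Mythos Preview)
Your argument is correct and is exactly the standard proof of this fact via the Perron eigenequation. Note, however, that the paper does not supply its own proof of Lemma~\ref{lem::2.2}: it is quoted from \cite{B.S.} without argument, so there is nothing in the paper to compare your proposal against. Your write-up is self-contained and would serve perfectly well as a proof here.
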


\begin{lem}[See \cite{R.A.}]\label{lem::2.3}
Let $G$ be a connected graph, and let $G'$ be a proper subgraph of $G$. Then $\rho(G')<\rho(G)$.
\end{lem}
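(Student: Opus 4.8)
The plan is to prove this through Perron--Frobenius theory, exploiting that $G$ being connected makes $A(G)$ an irreducible, nonnegative, symmetric matrix. For such a matrix the spectral radius $\rho(G)$ is a simple eigenvalue attained by a strictly positive eigenvector, and it admits the Rayleigh characterization $\rho(G)=\max_{\|z\|=1}z^{T}A(G)z$, with the maximum attained only at scalar multiples of that positive vector. I would first derive the weak inequality $\rho(G')\le\rho(G)$ for a general subgraph, and then upgrade it to strict using irreducibility.

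For the weak inequality, let $y\ge 0$ be a unit eigenvector of $A(G')$ for its spectral radius $\rho(G')$; such a nonnegative $y$ exists because $A(G')$ is symmetric and entrywise nonnegative even when $G'$ is disconnected, since $z^{T}A(G')z\le |z|^{T}A(G')|z|$ shows the Rayleigh maximum of $A(G')$ is reached at a nonnegative vector. Extend $y$ to a vector $\tilde y$ on $V(G)$ by setting its coordinates to $0$ on $V(G)\setminus V(G')$, so $\tilde y\ge 0$ and $\|\tilde y\|=1$. Because $E(G')\subseteq E(G)$ and all coordinates of $\tilde y$ are nonnegative,
\[
\rho(G')=y^{T}A(G')y=\!\!\sum_{u,v\in V(G')}\!\!y_uA(G')_{uv}y_v\le\!\!\sum_{u,v\in V(G')}\!\!y_uA(G)_{uv}y_v=\tilde y^{T}A(G)\tilde y\le\rho(G),
\]
where the last step is the Rayleigh bound applied to the unit vector $\tilde y$, and the middle coordinates involving deleted vertices vanish since $\tilde y$ is zero there.

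To obtain strictness, suppose for contradiction that $\rho(G')=\rho(G)$, which forces equality in both steps of the display. Equality in the Rayleigh bound means $\tilde y$ is an eigenvector of $A(G)$ for $\rho(G)$; since $A(G)$ is irreducible, its Perron eigenvector is unique up to scaling and strictly positive, hence $\tilde y>0$. If $G'$ is proper because $V(G')\subsetneq V(G)$, this already contradicts $\tilde y$ vanishing on the deleted vertices. Otherwise $V(G')=V(G)$ and $\tilde y=y>0$, and equality in the first chain forces $\sum_{u,v}y_u\bigl(A(G)_{uv}-A(G')_{uv}\bigr)y_v=0$; as every summand is nonnegative and $y>0$, this yields $A(G)=A(G')$, contradicting that $G'$ is a proper subgraph. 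Either way we reach a contradiction, so $\rho(G')<\rho(G)$.

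I expect the only delicate point to be the strict-inequality step, which is exactly where connectivity of $G$ is indispensable: it is the irreducibility of $A(G)$ that guarantees the Perron eigenvector is simultaneously unique and strictly positive, and hence cannot vanish on a deleted vertex. Separating the two ways a subgraph can be proper, namely vertex removal versus edge removal among common vertices, keeps the equality analysis clean; without irreducibility one would only recover the non-strict bound $\rho(G')\le\rho(G)$.
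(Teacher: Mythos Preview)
Your proof is correct and is precisely the standard Perron--Frobenius argument for this fact. The paper does not prove this lemma at all; it simply cites it from Horn and Johnson's \emph{Matrix Analysis}, where the argument is essentially the one you gave: the Rayleigh quotient yields $\rho(G')\le\rho(G)$, and irreducibility of $A(G)$ forces the Perron vector to be strictly positive, which rules out equality whether the subgraph lacks a vertex or an edge.
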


\renewcommand\proofname{\bf Proof of Theorem \ref{thm::1.2}}
\begin{proof}
Suppose that $G'\in\mathcal{G}_{n,\delta}^{\kappa_r^h}$ is a graph that attains the maximum spectral radius.  Then
\begin{equation}\label{equ::1}
\begin{aligned}
\rho(G)\leq \rho(G').
\end{aligned}
\end{equation}
Since $n\geq \kappa^h_r+r(h+1)$, by the definition of $h$-extra $r$-component connectivity, there exists some nonempty
subset $S\subseteq V(G')$ with $|S|=\kappa^h_r$ such that $G'-S$ contains at least $r$ components and each
component has at least $h+1$ vertices. Let $G'-S=C_1\cup C_2\cup\cdots\cup C_q$ where $q\geq r$, and let 
$|V(C_i)|=n_i\geq 2$ for $1\leq i\leq q$. Then $q=r$ by Lemma \ref{lem::2.3}. Choose a vertex $u\in V(G')$
such that $d_{G'}(u)=\delta$. Suppose that $d_S(u)=t$, $P_i=N_{G'}(u)\cap V(C_i)$ and  $|P_i|=p_i$ for $1\leq i\leq r$. Thus,
$\sum_{i=1}^{r}p_i=\delta-t$. Assume that $x$ is the Perron vector of $A(G')$. By symmetry,  $x_v=x_i$ for $v\in P_i$ and 
$x_v={x_i}'$ for $v\in V(C_i)\backslash P_i$, where $1\leq i\leq r$. Without loss of generality, assume that
$x_1=\max\{x_i\mid 1\leq i\leq r\}$. Now, we divide the proof into the following three cases.

{\flushleft\bf{Case 1.}} $\delta\leq \kappa^h_r$.

Due to the uncertainty of the minimum degree vertex $u$, we first present the following  Claim.

{\flushleft\bf Claim 1.} $u\notin S$.

If not, let $u\in S$. By the maximality of $\rho(G')$, $G'-u\cong K_{\kappa^h_r-1}\vee(K_{n_1}\cup
K_{n_2}\cup\cdots\cup K_{n_r})$. We first assert that $N_{G'}(u)\nsubseteq V(S\cup C_i)$ and $N_{G'}(u)\nsubseteq V(C_i)$ for
$1\leq i\leq r$. Otherwise, let $N_{G'}(u)\subseteq V(S\cup C_i)$ or $N_{G'}(u)\subseteq V(C_i)$ for $1\leq i\leq r$.
Set $S'=S- \{u\}$. Obviously,
$\kappa_r^h=|S'|=|S|-1$, which contradicts $\kappa_r^h=|S|$. This implies that 
$d_S(u)\neq \delta-1$. We next assert that
$d_S(u)=\delta$ for $\delta\leq \kappa_r^h-1$ and $d_S(u)=\delta-2$ for 
$\delta= \kappa_r^h$. Otherwise, suppose that
$d_S(u)\leq\delta-2$ for $\delta\leq \kappa_r^h-1$ or $d_S(u)\leq \delta-3$ for 
$\delta= \kappa_r^h$. Then $|N_{G'}(u)\cap V(G'\backslash S)|\geq 2$. Let $N_S(u)=\{v_1,v_2,\ldots,v_t\}$ and $V(S)\backslash
N_S[u]=\{v_{t+1},v_{t+2},\ldots,v_{\kappa^h_r-1}\}$. Before proving that $d_S(u)\leq\delta-2$ for $\delta\leq \kappa_r^h-1$ or $d_S(u)\leq \delta-3$ for $\delta= \kappa_r^h$, let's first prove that $x_{v_{t+1}}\geq x_1$. Note that 
$x_{v_i}=x_{v_1}$ for $2 \leq i \leq t$ and
$x_{v_i}=x_{v_{t+1}}$ for $t+2 \leq i \leq \kappa^h_r-1$. Then
\begin{align}
\rho(G') x_i & =t x_{v_1}+(\kappa^h_r-t-1) x_{v_{t+1}}+(p_i-1) x_i+(n_i-p_i) x_i^{\prime}+x_{u},\label{equ::2}\\
\rho(G') x_i^{\prime} & =t x_{v_1}+(\kappa^h_r-t-1) x_{v_{t+1}}+p_i x_i+(n_i-p_i-1) x_i^{\prime}, \label{equ::3}\\
\rho(G') x_{u} & =t x_{v_1}+\sum_{i=1}^r p_i x_i,\label{equ::4}\\
\rho(G') x_{v_{t+1}} & =t x_{v_1}+(\kappa^h_r-t-2) x_{v_{t+1}}+\sum_{i=1}^r p_i x_i+\sum_{i=1}^r(n_i-p_i)
x_i^{\prime},\label{equ::5}
\end{align}
where $1 \leq i \leq r$. From (\ref{equ::2})-(\ref{equ::4}), we deduce that
\begin{align*}
& \rho(G')\Big (\sum_{i=2}^r p_i x_i+\sum_{i=2}^r(n_i-p_i) x_i^{\prime}-x_{u}\Big) \\
= & \Big(\sum_{i=2}^r n_i-1\Big) t x_{v_1}+\sum_{i=2}^r n_i(\kappa^h_r-t-1) x_{v_{t+1}}+\sum_{i=2}^r(n_i-1) p_i x_i+ \\
& \sum_{i=2}^r(n_i-1)(n_i-p_i) x_i^{\prime}+\sum_{i=2}^r p_i x_{u}-\sum_{i=1}^r p_i x_i \\
\geq & \big((r-1)(h+1)-1\big)t x_{v_1}+(r-1)(h+1)(\kappa^h_r-t-1) x_{v_{t+1}}- p_1 x_1\\
&\quad(\text{since }n_i \geq h+1 \text { and } 0 \leq p_i \leq n_i \text { for } 2 \leq i \leq r) \\
> & (h+1)(\kappa^h_r-t-1) x_{v_{t+1}}-(\delta-t-1) x_1 \\
&\quad(\text {since } r \geq 2,~\sum_{j=1}^r p_j=\delta-t \text { and } x_1 \geq x_i \text { for } 2 \leq i \leq r) \\
> & (\delta-t-1)(x_{v_{t+1}}-x_1) \quad(\text {since } h \geq 1 \text { and } \kappa^h_r \geq \delta).
\end{align*}
Combining this with (\ref{equ::2}) and (\ref{equ::5}), we have
$$
(\rho(G')+1)(x_{v_{t+1}}-x_1)=\sum_{i=2}^r p_i x_i+\sum_{i=2}^r(n_i-p_i) x_i^{\prime}-x_{u} \geq
\frac{(\delta-t-1)(x_{v_{t+1}}-x_1)}{\rho(G')},
$$
and hence
$$
\frac{(\rho^2(G')+\rho(G')-\delta+t+1)(x_{v_{t+1}}-x_1)}{\rho(G')} \geq 0.
$$
Notice that $G'$ contains $K_{\kappa^h_r-1+n_i}$ as a proper subgraph, where $1 \leq i \leq r$. Thus, $\rho(G')>$
$\rho(K_{\kappa^h_r-1+n_i})=\kappa^h_r+n_i-2 \geq \delta$ due to $n_i \geq 2$ for $1 \leq i \leq r$ and $\kappa^h_r \geq\delta$. Combining this with 
$t \geq 0$, we can conclude that $\rho^2(G')+\rho(G')-\delta+t>0$. This suggests that
$x_{v_{t+1}} \geq x_1$.

If $\delta\leq \kappa^h_r-1$, let $G_1=G'-\{u v \mid v \in P_i, 1 \leq i \leq r\}+\{u v_j \mid t+1 \leq j \leq \delta\}$.
According to Lemma \ref{lem::2.1},
\begin{equation}\label{equ::6}
\begin{aligned}
 \rho(G_1)>\rho(G').
\end{aligned}
\end{equation}
Furthermore, we take two different vertices $z,w\in V(C_r)$. Clearly, $x_z>x_u$ due to $N_{G_1}(u)\subseteq N_{G_1}(z)$ and Lemma
\ref{lem::2.2}. Note that $x_v=x_i'$ for $v\in V(C_i)$, $x_{v_i}=x_{v_1}$ for $2\leq i\leq \delta$ and
$x_{v_j}=x_{v_{\delta+1}}$ for $\delta+2\leq j\leq \kappa_r^h-1.$ Then
\begin{align}
\rho(G_1) x_{v_{1}}&=(\delta-1)x_{v_1}+(\kappa_r^h-\delta-1)x_{v_{\delta+1}}+\sum_{i=1}^{r}n_ix_i'+x_u,\label{equ::7}\\
\rho(G_1) x_i'&=\delta x_{v_1}+(\kappa_r^h-\delta-1)x_{v_{\delta+1}}+(n_i-1)x_i'.\label{equ::8}
\end{align}
Suppose that $E_1=\{zv\mid v\in V(G_1\backslash (S\cup C_r))\}+\{uw\}$ and  $E_2=\{uv_{\delta}\}$. Let $G_2=G_1+E_1-E_2$.
Combining this with (\ref{equ::7}) and (\ref{equ::8}),
\begin{align*}
&~~~\rho(G_2)-\rho(G_1)\geq x^T(A(G_2)-A(G_1))x \\
&=2\Big(\sum_{zv\in E_1}x_zx_v+x_ux_w-x_ux_{v_{\delta}}\Big)\\
&\geq 2\Big(\sum_{zv\in E_1}x_zx_v+x_zx_w-x_zx_{v_{\delta}}\Big)\quad(\text {since } x_{w} < x_{v_{\delta}} \text { and }
x_z>x_u) \\
&=2x_z\Big(\sum_{i=1}^{r-1}n_ix_i'+x_r'-x_{v_1}\Big)\\
&=\frac{2x_z}{\rho(G_1)}\Big(\sum_{i=1}^{r-1}n_i(\delta
x_{v_1}+(\kappa_r^h-\delta-1)x_{\delta+1}+(n_i-1)x_i')+(n_r-1)x_r'+\\
&~~\delta
x_{v_1}+(\kappa_r^h-\delta-1)x_{v_{\delta+1}}-\big((\delta-1)x_{v_1}+(\kappa_r^h-\delta-1)x_{v_{\delta+1}}+\sum_{i=1}^{r}n_ix_i'+x_u\big)\Big)\\
&\geq\frac{2x_z}{\rho(G_1)}\Big(3\delta
x_{v_1}+\sum_{i=1}^{r-1}n_ix_i'+(n_r-1)x_r'-(\delta-1)x_{v_1}-\sum_{i=1}^{r}n_ix_i'-x_u\Big)\\
&=\frac{2x_z}{\rho(G_1)}\Big((2\delta+1)x_{v_1}-x_r'-x_u\Big)\\
&> 0\quad(\text {since } x_{v_1} > x_r' \text { and } x_{v_1} > x_u).
\end{align*}
It follows that $\rho(G_2)>\rho(G_1)$.  Let $S^{\prime}=S-u+z$. Then 
$G_2-u\cong K_{\kappa^h_r}\vee(K_{n_1}\cup K_{n_2}\cup\cdots\cup K_{n_r-1})$, and hence $G_2$ is connected. Note that 
$S^{\prime}$ is the subset of $V(G_2)$ with minimum cardinality such that 
$G_2-S^{\prime}$ contains at least $r$ connected components, and each component has at least $h+1$vertices. 
Combining this with $\delta(G_2)=d_{G_2}(u)=\delta$ and  $|S^{\prime}|=\kappa^h_r$, we have $G_2\in \mathcal{G}_{n,\delta}^{\kappa_r^h}$. Furthermore, we obtain that $\rho(G_2)>\rho(G')$ by (\ref{equ::6}),
a contradiction. Thus, $u\notin S$ for $\delta\leq \kappa^h_r-1$.

If $\delta =\kappa^h_r$, since $d_S(u)\leq \kappa_r^h-2$ and $x_1=\max\{x_i\mid 1\leq i\leq r\}$, we have $p_1\geq1$. Let $z\in P_1$ and $G_3=G'-\{u v \mid v \in P_i\backslash\{z\}, 1 \leq i \leq r\}+\{u v_j \mid t+1 \leq j \leq
\kappa_r^h-1\}$. Recall that
$x_{v_{t+1}}>x_1$.  Thus, according to Lemma \ref{lem::2.1},
\begin{equation}\label{equ::9}
\begin{aligned}
\rho(G_3)>\rho(G').
\end{aligned}
\end{equation}
Take a vertex $w\in V(C_1)\backslash \{z\}$. Assume that $G_4=G_3+\{wv\mid v\in V(G\backslash S\cup C_1)\}$ and
$S^{\prime}=S-u+w$. Thus, $|S^{\prime}|=\kappa^h_r$, $G_4\in \mathcal{G}_{n,\delta}^{\kappa_r^h}$ and $\rho(G_4)>\rho(G_3)$ by Lemma \ref{lem::2.1}. Combining this with (\ref{equ::9}), we get
$\rho(G_4)>\rho(G^{\prime})$,
a contradiction. It follows that $u\notin S$ for $\delta= \kappa^h_r$. Completing the proof
of Claim 1.

Suppose that $u\in V(C_1)$ by Claim 1. Then $d_{C_1}(u)\geq 1$ because $n_1 \geq h+1\geq 2$ and $C_1$
is connected. This implies that $d_S(u)\leq d_{G_1}(u)-d_{C_1}(u)\leq \delta-1$.

{\flushleft\bf Claim 2.} $d_S(u)=\delta-1$.

If not, assume that $d_S(u)=t\leq\delta-2$. Thus, $d_{C_1}(u)=\delta-t\geq 2$. Let $N_S(u)=\{v_1, v_2, \ldots, v_t\}$, $S \backslash
N_S(u)=\{v_{t+1}, v_{t+2}, \ldots, v_{\kappa_r^h}\}$, $V(C_i)=\{u_i^1, u_i^2, \ldots, u_i^{n_i}\}$ and
$N_{C_1}(u)=\{u_1^1,u_1^2,\ldots,$ $u_1^{\delta-t}\}$. Again by the maximality of $\rho(G')$, it follows that $G'-u
\cong$ $K_{\kappa_r^h} \vee(K_{n_1-1} \cup K_{n_2} \cup \cdots \cup K_{n_r})$. Note that $x_v=x_1$ for $v\in N(u)\cap
V(C_1)$, $x_v=x_1'$ for $v\in V(C_1)\backslash N[u]$, $x_v=x_i'$ for $2\leq i\leq r$, $x_{v_i}=x_{v_1}$ for $2 \leq i \leq t$
and $x_{v_i}=x_{v_{t+1}}$ for $t+2 \leq i \leq \kappa_r^h$. Then
\begin{align*}
\rho(G') x_{v_{t+1}}&=t x_{v_1}+(\kappa_r^h-t-1) x_{v_{t+1}}+(\delta-t) x_1+(n_1-1-(\delta-t)) x_1^{\prime}+\sum_{i=2}^r
n_i x_i,\\
\rho(G') x_1 &=t x_{v_1}+(\kappa_r^h-t) x_{v_{t+1}}+(\delta-t-1) x_1+(n_1-1-(\delta-t)) x_1^{\prime}+x_u,\\
\rho(G') x_i' &=t x_{v_1}+(\kappa_r^h-t) x_{v_{t+1}}+(n_i-1)x_i',\\
\rho(G') x_u &=t x_{v_1}+(\delta-t)x_{v_1},
\end{align*}
where $2\leq i\leq r$. Hence,
\begin{align*}
 &\rho(G')(\rho(G')+1)(x_{v_{t+1}}-x_1)\\
= &\rho(G')(\sum_{i=2}^r n_i x_i'-x_u) \\
= & \sum_{i=2}^r n_i(\rho(G') x_i')-\rho(G') x_u \\
= & \sum_{i=2}^r n_i\big(t x_{v_1}+(\kappa_r^h-t) x_{v_{t+1}}+(n_i-1) x_i'\big)-(t x_{v_1}+(\delta-t) x_1) \\
> & ((r-1)(h+1)-1) t x_{v_1}+(r-1)(h+1)(\kappa_r^h-t) x_{v_{t+1}}-(\delta-t) x_1\\
&\quad(\text {since } n_i \geq h+1\geq 2 \text { for } 2 \leq i \leq r) \\
> & (h+1)(\kappa_r^h-t) x_{v_{t+1}}-(\delta-t) x_1\quad(\text {since } h \geq 1 \text { and } r \geq 2) \\
> & (\delta-t)(x_{v_{t+1}}-x_1)\quad(\text {since } \kappa_r^h \geq \delta),
\end{align*}
which indicates that $\big(\rho(G')(\rho(G')+1)-(\delta-t)\big)(x_{v_{t+1}}-x_1)>0$. Notice that $G'$ contains
$K_{\kappa^h_r+n_1-1}$ as a proper subgraph. Thus, $\rho(G')>\rho(K_{\kappa^h_r+n_1-1})=\kappa^h_r+n_1-2 \geq \delta$ due
to $n_1 \geq 2$ and $\kappa^h_r \geq \delta$, and hence $\rho(G')(\rho(G')+1)-\delta+t>0$. This suggests that
$x_{v_{t+1}}>x_1$. Take $G^*=G'-\{u u_1^i \mid 2 \leq i \leq \delta-t\}+\{u v_j \mid t+1 \leq j \leq \delta-1\}$. It is easy
to find that $G^*\in \mathcal{G}_{n,\delta}^{\kappa_r^h}$. So, $\rho(G^*)>\rho(G')$ by Lemma \ref{lem::2.1}, a contradiction. Thus, $d_S(u)=\delta-1$. Completing the proof
of Claim 2.

Recall that $|V(C_i)|=n_i\geq 2$ for $1\leq i\leq r$. Suppose that $n_1\geq n_2\geq \cdots\geq n_r$.

{\flushleft\bf Claim 3.} $u\in V(C_i)$ for some $n_i=n_r$, where $1\leq i\leq r$.

If $n_1=n_r$, the result is trivial. Next, we consider the case of $n_1>n_r$. We use proof by contradiction, assume that there exists $C_k$ such that $u\in V(C_k)$ for $n_k>n_r$, where $1\leq k\leq r-1$. Then $G'-u\cong K_{\kappa_r^h}\vee(K_{n_1}\cup\cdots \cup K_{n_k-1}\cup\cdots\cup
K_{n_r})$ by the maximality of $\rho(G')$. By symmetry, $x_{v_i}=x_{v_1}$ for $2 \leq i \leq \delta-1$ and
$x_{v_i}=x_{v_{\delta}}$ for $\delta+1 \leq i \leq \kappa_r^h$. Let $uu_k^1\in E(C_k)$, $x_v=x_k'$ for
$V(C_k)\backslash N_{C_k}[u]$, and $x_v=x_i'$ for $v\in{V(C_i)}$, where $1\leq i\leq r$ and $i\neq k$. Then
\begin{align}
\rho(G') x_k'&=(\delta-1)x_{v_1}+(\kappa_r^h-\delta+1)x_{v_\delta}+x_k+(n_k-2)x_k',\label{equ::10}\\
\rho(G') x_r'&=(\delta-1)x_{v_1}+(\kappa_r^h-\delta+1)x_{v_\delta}+(n_r-1)x_r',\label{equ::11}\\
\rho(G') x_u&=(\delta-1)x_{v_1}+x_{u_k^1}.\label{equ::12}
\end{align} From (\ref{equ::10}) and (\ref{equ::11}), we get
$x_k>x_k'=\frac{\rho(G')-n_r+1}{\rho(G')-n_k+2}x_r'+\frac{x_k}{\rho(G')-n_k+2}>x_r'$ due to $\rho(G')-n_r+1\geq
\rho(G')-n_k+2$ and $\rho(G')-n_k+2>0$. From (\ref{equ::11}) and (\ref{equ::12}), we obtain that $x_r'>x_u$ because
$\kappa_r^h-\delta+1\geq1$ and $x_{v_\delta}>x_{u_k^1}$ by Claim 2.

Suppose that $E_1=\{uu_r^1\}+\{u_r^{n_r}u_k^j\mid 1\leq j\leq n_k-1\}$, and $E_2=\{uu_k^1\}+\{u_r^{n_r}u_r^j\mid 1\leq j\leq
n_r-1\}$.
Let $G_1=G'+E_1-E_2$. Then $G_4\in \mathcal{G}_{n,\delta}^{\kappa_r^h}$ and
\begin{align*}
&~~~\rho(G_1)-\rho(G')\\
&\geq x^T(A(G_1)-A(G'))x \\
&=2x_ux_{u_r^k}+2x_{u_r^{n_r}}x_{u_k^1}+2(n_k-2)x_{u_r^{n_r}}x_{u_k^2}-2x_ux_{u_k^1}-2(n_r-1)x_{u_r^{n_r}}x_{u_r^1}\\
&=2x_ux_r'+2x_r'x_k+2(n_k-2)x_r'x_k'-2x_ux_k-2(n_r-1)x_r'x_r'\\
&>2x_k(x_r'-x_u)+2x_r'(n_r-1)(x_k'-x_r')\\
&>0\quad(\text {since } x_r'>x_u,~n_r\geq2\text{ and }x_k'> x_r').
\end{align*}
It follows that $\rho(G_1)>\rho(G')$,
a contradiction. Thus, $u\in V(C_i)$ for $n_i=n_r$, as desired.

Next, we shall prove that $G' \cong G_n^{\kappa^h_r, \delta}$ for $\delta\leq \kappa_r^h$. In fact, by Claims 1-3
and the maximality of $\rho(G')$, we can deduce that $G'-u \cong K_{\kappa^h_r} \vee(K_{n_1} \cup K_{n_2} \cup \cdots \cup
K_{n_{r}-1})$ for some integers $n_1 \geq n_2 \geq \cdots \geq n_r$, where $\sum_{i=1}^{r} n_i=n-\kappa^h_r$, $u\in V(C_r)$
and $d_S(u)=\delta-1$. If $n_i=h+1$ for all $2\leq i\leq r$, then the result holds. Suppose to the contrary that $n_i=h+1$. Thus, there exists some $n_k \geq h+2$ for
$2 \leq k \leq r$.  Note that $u\in V(C_r)$. Clearly, $x_u<x_r'<x_r<x_i'$ for $1\leq i\leq r-1$.
Since $$(\rho(G')-n_i+1)x_i'=(\delta-1)x_{v_1}+(\kappa^h_r-\delta+1)x_{v_\delta}$$ for any $1\leq i\leq r-1$ and
$x_1'\geq x_k'$ where $n_1 \geq n_k$. Take $w \in V(K_{n_k})$ and $G''=G'-\{w v \mid v \in V(K_{n_k}) \backslash\{w\}\}+\{w v \mid v \in V(K_{n_1})\}$. Then $G''\in \mathcal{G}_{n,\delta}^{\kappa_r^h}$ and $\rho(G'')>\rho(G')$ by Lemma \ref{lem::2.1}, a contradiction. It suggests that $G' \cong G_n^{\kappa_r^h, \delta}$, as desired.

{\flushleft\bf{Case 2.}} $\kappa^h_r+1\leq\delta<\kappa^h_r+h$.

Similar to Case 1, we first make the following Claim due to the uncertainty of the minimum degree vertex $u$.
{\flushleft\bf Claim 4.} $u\notin S$.

If not, $u\in S$. By the maximality of $\rho(G')$, we get $G'-u\cong K_{\kappa^h_r-1}\vee(K_{n_1}\cup
K_{n_2}\cup\cdots\cup K_{n_r})$.  By using a similar analysis as Case 1, we deduce that $N_{G'}(u)\nsubseteq V(S\cup C_i)$ and $N_{G'}(u)\nsubseteq V(C_i)$ for
$1\leq i\leq r.$ We assert that $d_S(u)=\kappa^h_r-1$. If not, $d_S(u)\leq \kappa^h_r-2$.  Then $N_{G'}(u)\cap
V(G'\backslash S)\geq 3$. Let
$V(C_i)=\{u_i^1,\ldots,u_i^{n_i}\}$ and $V(C_i)\cap N_{G'}(u)\{u_i^1,\ldots,u_i^{p_i}\}$ for $1\leq i\leq r$, and $N_{G'}(u)\cap V(G'-S)=\{u_1,\ldots,u_{\delta-t}\}.$
Similar to Claim 1 of Case 1, we have $x_{v_{t+1}}\geq x_1$. Recall that $N_{G'}(u)\nsubseteq V(S)\cup V(C_i)$ for $1\leq
i\leq r$. Without loss of generality, suppose that $u_1\in V(C_i)$ and $u_2\in V(C_j)$ for $1\leq i< j\leq r$. Let
$G_1=G'-\{uu_i\mid 3\leq i\leq \kappa^h_r-t+1\}+\{uv_j\mid t+1\leq j\leq \kappa^h_r-1\}$.
Then $G_1\in \mathcal{G}_{n,\delta}^{\kappa_r^h}$ and $\rho(G_1)>\rho(G')$ by Lemma \ref{lem::2.1}, a contradiction. This implies that $d_S(u)=\kappa^h_r-1$. Notice that $x_{v_i}=x_{v_1}$ for $2\leq i\leq \kappa^h_r-1$. Thus,
\begin{align*}
\rho(G') x_1^{\prime}&=(\kappa^h_r-1)x_{v_1}+p_1x_1+(n_1-p_1-1)x_1^{\prime},\\
\rho(G') x_1&=(\kappa^h_r-1)x_{v_1}+(p_1-1)x_1+(n_1-p_1)x_1^{\prime}+x_u,\\
\rho(G') x_i&=(\kappa^h_r-1)x_{v_1}+(p_i-1)x_i+(n_i-p_i)x_i^{\prime}+x_u,
\end{align*}
where $2\leq i\leq r$. Hence,
\begin{equation}
\begin{split}\label{equ::13}
p_1x_1+(n_1-p_1)x_1^{\prime}&> p_ix_i+(n_i-p_i-1)x_i^{\prime}\\ \big(\rho(G')+1\big)x_1&=\big(\rho(G')+1\big)x_1^{\prime}+x_u.
\end{split}
\end{equation}

We assert that $n_1\geq n_i$ for $2\leq i\leq r$. If not, there exists some $k$ $(2\leq k\leq r)$ such that $n_1<n_k$. Obviously, $p_1>p_k$. Otherwise, we have $x_1<x_k$, a contradiction. Take a vertex $w\in V(C_k)$ such that $wu\notin E(G)$.
Let $G^*=G'+\{wv\mid v\in V(C_1)\}-\{wv\mid v\in V(C_k)\}$. Then
\begin{align*}
\rho(G_1)-\rho(G')&\geq x^T(A(G^*)-A(G'))x \\
&=2x_w(p_1x_1+(n_1-p_1)x_1^{\prime}- (p_kx_k+(n_k-p_k-1)x_k^{\prime}))\\
&>0
\end{align*}
due to (\ref{equ::13}).
It follows that $\rho(G^*)>\rho(G')$,
a contradiction. Thus, $n_1\geq n_i$ for $2\leq i\leq r$.

In this situation, suppose that $E_1=\{uu_1^i\mid p_1+1\leq i\leq \delta-\kappa^h_r+1\}$ and $E_2=\{uv\in G'\mid v\in \bigcup_{i=2}^r C_i\}$.
Let $G_2=G'+E_1-E_2$ and $y$ be the Perron vector of $A(G_2)$. Recall that $n_1\geq n_i$ for $2\leq i\leq r$. Then
\begin{align*}
\rho(G_2)y_1&=(\kappa^h_r-1)y_{v_1}+\big(\delta-\kappa^h_r\big)y_1+(n_1-\delta+\kappa^h_r-1)y_1^{\prime}+y_u,\\
\rho(G_2)y_1'&=(\kappa^h_r-1)y_{v_1}+\big(\delta-\kappa^h_r+1\big)y_1+\big(n_1-\delta\kappa^h_r \big)y_1',\\
\rho(G_2)y_i'&=(\kappa^h_r-1)y_{v_1}+\big(n_i-1 \big)y_i',
\end{align*}
where $2\leq i\leq r$, which indicates that
\begin{align*}
\big(\rho(G_2)+1\big)y_1=\big(\rho(G_2)+1\big)y_1^{\prime}+y_u\text{ and } y_1'>y_i'.
\end{align*}
Combining this with (\ref{equ::13}), for any $2\leq i, j\leq r$, it follows that
\begin{align*}
&~~~x_{u}y_1+x_1^{\prime}y_{u}-x_{u}y_i^{\prime}-x_jy_{u}=x_u(y_1-y_i^{\prime})+y_u(x_1^{\prime}-x_j)\\
&>x_u(y_1-y_1^{\prime})+y_u(x_1^{\prime}-x_1)=\frac{x_uy_u}{\rho(G_2)+1}-\frac{x_uy_u}{\rho(G')+1}\\
&=\frac{\big(\rho(G')-\rho(G_2)\big)x_uy_u}{(\rho(G_2)+1)(\rho(G')+1)}.
\end{align*}
If $\rho(G')-\rho(G_2)<0$, then $\rho(G')<\rho(G_2)$, and if $\rho(G')-\rho(G_2)\geq0$, then we may assume that
$x_l=\max\{x_i\mid 2\leq i\leq r\}$ and $y_k'=\max\{y_i'\mid 2\leq i\leq r\} $, and then
\begin{align*}
y^T(\rho(G_2)-\rho(G'))x& =y^T(A(G_2)-A(G'))x \\
&=\sum_{uu_1^i\in E_1}(x_uy_{u_1^i}+x_{u_1^i}y_{u})-\sum_{uv\in E_2}(x_{u}y_{v}+x_{v}y_{u})\\
&\geq(\delta-\kappa^h_r-p_1+1)(x_{u}y_1+x_1^{\prime}y_{u}-x_{u}y_k^{\prime}-x_ly_{u})\\
&>(\delta-\kappa^h_r-p_1+1)\frac{\big(\rho(G')-\rho(G_2)\big)x_uy_u}{(\rho(G_2)+1)(\rho(G')+1)}\\
&\geq0 ~~ (\text{since}~~\rho(G')\geq \rho(G_2)),
\end{align*}
a contradiction. Thus, we conclude that $\rho(G_2)>\rho(G')$. In this case, $N_{G_2}(u)\cap V(C_1)=\delta-\kappa^h_r+1$.
Furthermore, take a vertex $z\in V(C_1)\cap N(u)$. Let $G_3=G_2+\{zv\mid v\in \bigcup_{i=2}^rV(C_i)\}$ and $S^{\prime}=S-u+z$.
Then $|S^{\prime}|=\kappa^h_r$ and $G_3\in \mathcal{G}_{n,\delta}^{\kappa_r^h}$. It follows that $\rho(G_3)>\rho(G_2)$. Combining this with $\rho(G_2)>\rho(G')$, we have $\rho(G_3)>\rho(G^{\prime})$, a contradiction. Thus, $u\notin S$.

Suppose that $u\in V(C_1)$ by Claim 4.  Then $d_S(u)=
d_{G_1}(u)-d_{C_1}(u)\leq \kappa^h_r$.
{\flushleft\bf Claim 5.} $d_S(u)=\kappa_r^h$.

Otherwise, let $d_S(u)\leq \kappa_r^h-1$. Then $d_{C_1}(u)\geq \delta-\kappa^h_r\geq2$. Similar to Claim 2,  we get
that $x_{v_{t+1}}>x_1$. Take $G^*=G^{\prime}-\{u u_1^i \mid 1 \leq i \leq \kappa_r^h-t\}+\{u v_j \mid t+1 \leq j \leq
\kappa_r^h\}$. Then $G^*\in \mathcal{G}_{n,\delta}^{\kappa_r^h}$ and $\rho(G^*)>\rho(G')$ by Lemma \ref{lem::2.1}, a contradiction.

Suppose that $n_1\geq n_2\geq \cdots\geq
n_r$. By using the same analysis as Claim 3,  we have $u\in V(C_i)$ for $n_i=n_r$, where $1\leq i\leq r$. In what follows, we shall prove that $G^{\prime} \cong G_n^{\kappa^h_r, \delta}$ for $\kappa^h_r+1\leq\delta<\kappa^h_r+h$. In fact, by
the maximality of $\rho(G^{\prime})$, Claims 4, 5 and $u\in V(C_i)$ for $n_i=n_r$, we can deduce that $G^{\prime}-u \cong
K_{\kappa^h_r} \vee(K_{n_1} \cup K_{n_2} \cup \cdots \cup K_{n_{r}-1})$ for some integers $n_1 \geq n_2 \geq \cdots \geq
n_r$, where $\sum_{i=1}^{r} n_i=n-\kappa^h_r$, $u\in V(C_r)$ and $d_S(u)=\kappa^h_r$. If $n_i=h+1$ for all $2\leq i\leq r$, then the result holds. Similar to Case 1, we get $n_i=h+1$ for all $2\leq i\leq r$. It follows that $G' \cong G_n^{\kappa_r^h, \delta}$, as desired.
{\flushleft\bf{Case 3.}} $\delta\geq \kappa^h_r+h$.

Recall that $|V(C_i)|=n_i$ for $1 \leq i \leq r$. It is easy to obtain that
$$
\rho(G) \leq \rho(K_{\kappa_r^h} \vee(K_{n_1} \cup K_{n_2} \cup \cdots \cup K_{n_r})),
$$
with equality if and only if $G \cong K_{\kappa_r^h} \vee(K_{n_1} \cup K_{n_2} \cup \cdots \cup K_{n_r})$.
We assert that $n_i\geq \delta+1-\kappa_r^h$ for $2\leq i\leq r$. If not, $n_i< \delta-\kappa_r^h+1$. Then there exists $u\in
V(C_i)$ such that $d_{G^{\prime}}(u)<\delta$, a contradiction. On the other hand, we assert that $n_i\leq \delta+1-\kappa_r^h$ for $2\leq
i\leq r$. Otherwise, there exists $n_k$ such that $n_k> \delta+1-\kappa_r^h$ where $2\leq k\leq r$. Let $x_v=x_1$ for $v\in V(K_{n_1})$ and $x_v=x_k$ for $v \in V(K_{n_k})$. Recall that $x$ is the Perron vector of $A(G')$. Since
$$(\rho(G^{\prime})-n_i+1)x_i=\kappa^h_rx_{v_1}$$
for $1\leq i\leq r$,
$x_1\geq x_k$ due to $n_1 \geq n_k$. Take $w \in V(K_{n_k})$ and $G_1=G^{\prime}-\{w v \mid v \in V(K_{n_k}) \backslash\{w\}\}+\{w v \mid v \in V(K_{n_1})\}$.
Then $G_1\in \mathcal{G}_{n,\delta}^{\kappa_r^h}$ and $\rho(G_1)>\rho(G')$ by Lemma \ref{lem::2.1}, a contradiction. Therefore, $n_i=\delta+1-\kappa_r^h$ for $2\leq i\leq r$. By
the maximality of $\rho(G^{\prime})$, we conclude that $G^{\prime} \cong K_{\kappa_r^h}
\vee(K_{n-\kappa_r^h-(\delta+1-\kappa_r^h)(r-1)} \cup (r-1) K_{\delta+1-\kappa_r^h})$.

This completes the proof.
\end{proof}

\section{ Proof of Theorem \ref{thm::1.3}}

Our one primary tool is sharp upper bound on the spectral radius, which was obtained by Hong, Shu, and Fang \cite{Y.H} and
Nikiforov \cite{V.N}, independently.

\begin{lem}[See \cite{Y.H} and \cite{V.N}]\label{lem::3.1}
Let $G$ be a connected graph on $n$ vertices and $m$ edges with minimum degree $\delta\geq 1$. Then $$\rho(G)\leq
\frac{\delta-1}{2}+\sqrt{2m-n\delta+\frac{(\delta+1)^2}{4}},$$ with equality if and only if $G$ is a $\delta$-regular graph
or a bidegreed graph in which each vertex is of degree either $\delta$ or $n-1$.
\end{lem}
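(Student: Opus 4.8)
The plan is to prove the inequality in the equivalent quadratic form
$(\rho-\delta)(\rho+1)\le 2m-n\delta$, since completing the square turns this into the stated radical bound. Throughout I would write $\rho=\rho(G)$, take $x$ to be the Perron vector of $A(G)$ (all entries positive because $G$ is connected), fix a vertex $w$ with $x_w=\max_{v} x_v$, and set $\sigma=\sum_{v\in V(G)}x_v$. The whole argument rests on pairing one global identity with one local estimate at $w$.

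First I would sum the eigenvalue equations $\rho x_v=\sum_{u\sim v}x_u$ over all $v$. Since each $x_u$ is counted once per neighbour, this yields $\rho\sigma=\sum_v d_v x_v$, and subtracting $\delta\sigma$ gives the key identity $(\rho-\delta)\sigma=\sum_v (d_v-\delta)x_v$. Because $d_v\ge\delta$ and $x_v\le x_w$ for every $v$, the right-hand side is at most $x_w\sum_v(d_v-\delta)=(2m-n\delta)x_w$. Next I would bound $\sigma$ from below using $w$: the eigenvalue equation at $w$ gives $\rho x_w=\sum_{u\sim w}x_u\le\sum_{u\ne w}x_u=\sigma-x_w$ (neighbours of $w$ are among the other vertices and all entries are nonnegative), so $\sigma\ge(\rho+1)x_w$. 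Combining these with $\rho\ge 2m/n\ge\delta$ (the Rayleigh quotient at the all-ones vector, together with $2m=\sum_v d_v\ge n\delta$), which guarantees $\rho-\delta\ge 0$ so the last inequality may be multiplied through, produces the chain $(\rho-\delta)(\rho+1)x_w\le(\rho-\delta)\sigma=\sum_v(d_v-\delta)x_v\le(2m-n\delta)x_w$. Dividing by $x_w>0$ gives $(\rho-\delta)(\rho+1)\le 2m-n\delta$; since $\delta+\tfrac{(\delta-1)^2}{4}=\tfrac{(\delta+1)^2}{4}$, completing the square and taking the positive root (valid as $\rho\ge\delta\ge\tfrac{\delta-1}{2}$) yields exactly the asserted bound.

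Finally I would carry out the equality analysis, which is where the characterization is pinned down: equality in the bound forces equality in each estimate above. If $\rho=\delta$, then $2m/n=\delta$ forces every degree to equal $\delta$, so $G$ is $\delta$-regular. If $\rho>\delta$, then equality in $\sigma\ge(\rho+1)x_w$ means every vertex other than $w$ is adjacent to $w$, i.e.\ $d_w=n-1$; and equality in $\sum_v(d_v-\delta)x_v\le x_w\sum_v(d_v-\delta)$ forces, for each $v$, either $d_v=\delta$ or $x_v=x_w$. Any vertex attaining the maximum eigenvector value admits the same neighbour argument as $w$ and hence has degree $n-1$, while all remaining vertices have degree exactly $\delta$; since $\rho>\delta$ rules out $G\cong K_n$, both degrees genuinely occur, so $G$ is bidegreed with degrees $\delta$ and $n-1$. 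For the converse I would check directly that such a graph attains equality: its full-degree vertices are adjacent to everything, hence share a common eigenvector value by Lemma~\ref{lem::2.2}(ii), and each degree-$\delta$ vertex has a strictly smaller value, so the maximum is attained precisely on the full-degree set and every estimate collapses to an equality.

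I expect the inequality itself to be routine once the degree-weighted identity and the lower bound $\sigma\ge(\rho+1)x_w$ are in hand; the genuine obstacle is the equality bookkeeping, specifically verifying in both directions that the maximum eigenvector value is attained exactly on the full-degree vertices, for which Lemma~\ref{lem::2.2}(ii) is the convenient tool.
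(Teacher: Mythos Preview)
The paper does not prove Lemma~\ref{lem::3.1}; it is quoted from Hong--Shu--Fang \cite{Y.H} and Nikiforov \cite{V.N} and used as a black box. Your argument is correct and is essentially the standard proof from those references: the identity $(\rho-\delta)\sigma=\sum_v(d_v-\delta)x_v$ combined with $\sigma\ge(\rho+1)x_w$ is exactly Nikiforov's device, and your equality analysis matches the original characterization.
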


\begin{lem}\label{lem::3.2}
Let a, b and t be three integers.  If $b>a>t\geq1$ and $|a-b|<1$, then $$ab>(a+t)(b-t).$$
\end{lem}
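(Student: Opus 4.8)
The plan is to prove the strict inequality by a single algebraic rearrangement, so the whole argument is a short computation plus a sign check. First I would expand the right-hand side and regroup:
\[
(a+t)(b-t)=ab-at+bt-t^{2}=ab+t(b-a)-t^{2}=ab-t\bigl(t-(b-a)\bigr).
\]
Consequently the desired inequality $ab>(a+t)(b-t)$ is equivalent to $t\bigl(t-(b-a)\bigr)>0$, i.e.\ to the two assertions $t>0$ and $b-a<t$.

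Next I would read both of these off from the hypotheses. The first is immediate from $t\ge 1$. For the second, since $b>a$ we have $b-a=|a-b|$, and the assumption $|a-b|<1$ together with $t\ge1$ gives $b-a<1\le t$; hence $t-(b-a)>0$. As $t$ and $t-(b-a)$ are both positive, their product is positive, which by the displayed identity is precisely $ab>(a+t)(b-t)$, finishing the proof.

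Since $(a+t)+(b-t)=a+b$, the lemma simply says that among the two pairs with this fixed sum the more balanced one $(a,b)$ — balanced because the gap $b-a$ is strictly smaller than the perturbation $t$ — has the larger product; the extra hypotheses $b>a>t$ (and the fact that in the applications $a$ and $b$ will be real quantities such as $\rho$ shifted by clique orders rather than literal integers) merely fix the configuration in which the estimate is invoked, and are not needed for the inequality itself. There is essentially no obstacle here: the only thing requiring a moment's care is the direction of the inequality, namely noticing that the sign of $t-(b-a)$ — guaranteed positive because $b-a<1\le t$ — is exactly what forces the product to drop strictly when the larger factor is decreased by $t$ and the smaller one increased by $t$.
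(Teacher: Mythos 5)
Your proof is correct and follows essentially the same route as the paper's: both expand $ab-(a+t)(b-t)=t^{2}-t(b-a)$ and conclude positivity from $t\ge 1$ and $b-a<1$ (the paper bounds this quantity below by $t^{2}-t\ge 0$, while you factor it as $t\bigl(t-(b-a)\bigr)$, a cosmetic difference). Your side remark that the hypotheses are vacuous for literal integers and that the lemma is really applied to real quantities is a fair observation, but the argument itself matches the paper's.
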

\renewcommand\proofname{\bf Proof}
\begin{proof}
Since $b>a>t\geq1$ and $|a-b|<1$, we have $$ab-(a+t)(b-t)=at-bt+t^2>t^2-t\geq0,$$
as required.
\end{proof}
Denote by $\mathcal{K}_{n,\delta}^{\lambda_r^h}$ the set of graphs with $h$-extra $r$-component edge-connectivity $\lambda^h_r$ and with minimum degree $\delta$
obtained from $K_{n-(r-1)(h+1)}\cup (r-2)K_{h+1}\cup K_h\cup K_1$ by adding $t$ edges between $K_1$ and
$K_h$ for $1\leq t\leq \delta$,  and $r-1$ edges between a vertex in $K_{n-(r-1)(h+1)}$ and a vertex in each of
$K_{h+1}$ and $K_h$, and then adding $\lambda_r^h-r+1-\delta+t$ edges between $K_{n-(r-1)(h+1)}$ and $(r-2)K_{h+1}\cup
K_h\cup K_1$,
where $r\geq2$ and $\lambda_r^h\geq r-1$. Clearly, $\mathcal{K}_{n,\delta}^{\lambda_r^h} \subset \mathcal{B}_{n,\delta}^{\lambda_r^h}$.

\begin{lem}\label{lem::3.3}
Suppose that $r$, $\delta$, $n$ and $h$ are four positive integers such that $n\geq (\lambda_r^h+1)(h+1)^2$,
$\lambda_r^h\geq r-1$ and $h\geq\delta$.  Let $G\in \mathcal{K}_{n,\delta}^{\lambda_r^h}$. Then $n-(r-1)(h+1)-1<\rho(G)<
n-(r-1)(h+1)$.

\renewcommand\proofname{\bf Proof}
\begin{proof}
For $1\leq\delta\leq h$, $\rho(G)>\rho(K_{n-(r-1)(h+1)})=n-(r-1)(h+1)-1$ due to $G$ contains $K_{n-(r-1)(h+1)}$ as a proper subgraph and $n\geq (\lambda_r^h+1)(h+1)^2$. On the other hand,
\begin{align*}
e(G)&\leq\binom{n-(r-1)(h+1)}{2}+(r-2)\binom{h+1}{2}+\binom{h}{2}+\lambda_r^h+\delta\\
&\leq\binom{n-(r-1)(h+1)}{2}+(r-1)\binom{h+1}{2}+\lambda_r^h\quad(\text {since }\delta\leq h)\\
&=\frac{(n-(r-1)(h+1))(n-(r-1)(h+1)-1)}{2}+\frac{h(h+1)(r-1)}{2}+\lambda_r^h.
\end{align*}
Notice that $\lambda_r^h\geq r-1$, $r\geq 2$ and $h\geq \delta$. Thus,
\begin{align*}
&~~~\Big(n-(r-1)(h+1)-\frac{\delta-1}{2}\Big)^2-2e(G)+n\delta-\frac{(\delta+1)^2}{4}\\
&=\Big(n-(r-1)(h+1)-\frac{\delta-1}{2}\Big)^2-(n-(r-1)(h+1))(n-(r-1)(h+1)-1)-\\
&~~~h(h+1)(r-1)-2\lambda_r^h+n\delta-\frac{(\delta+1)^2}{4}\\
&=\big((r-1)(h+1)-1\big)\delta+2n-2(r-1)(h+1)-h(h+1)(r-1)-2\lambda_r^h\\
&\geq (r-1)(h+1)-1+2(\lambda_r^h+1)(h+1)^2-2(r-1)(h+1)-h(h+1)(r-1)-2\lambda_r^h\\
& \quad(\text {since }\delta\geq 1 \text{ and } n\geq (\lambda_r^h+1)(h+1)^2)\\
&=2\lambda_r^h h^2-rh^2+3h^2+4\lambda_r^h h-2rh+6h-r+2\\
&\geq 2(r-1)h^2-rh^2+3h^2+4(r-1) h-2rh+6h-r+2\quad(\text {since }\lambda_r^h\geq r-1)\\
&=rh^2+h^2+2rh+2h-r+2\\
&>0\quad(\text {since } r\geq2 \text{ and } h>\delta\geq1).
\end{align*}
Combining this with Lemma \ref{lem::3.1},
\begin{align*}
\rho(G)& \leq \frac{\delta-1}{2}+\sqrt{2e(G)-n\delta+\frac{(\delta+1)^2}{4}}< n-(r-1)(h+1).
\end{align*}
This completes the proof.
\end{proof}
\end{lem}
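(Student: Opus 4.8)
The plan is to sandwich $\rho(G)$ between the spectral radius of a clique contained in $G$ and the Hong--Shu--Fang/Nikiforov upper bound of Lemma \ref{lem::3.1}, the latter needing only an estimate of $e(G)$. Throughout, write $N:=n-(r-1)(h+1)$ for the order of the large clique used to build $\mathcal{K}_{n,\delta}^{\lambda_r^h}$; by construction every $G$ in this family is connected, has minimum degree $\delta\geq1$, and contains $K_N$ as an induced subgraph, which is proper since $G$ has $n>N$ vertices (as $r\geq2$, $h\geq1$). The lower bound is then immediate from Lemma \ref{lem::2.3}: $\rho(G)>\rho(K_N)=N-1=n-(r-1)(h+1)-1$.

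For the upper bound I would first count edges. The cliques contribute $\binom{N}{2}+(r-2)\binom{h+1}{2}+\binom{h}{2}$ edges, while the edges between distinct cliques number $t$ (joining $K_1$ to $K_h$) together with $(r-1)+(\lambda_r^h-r+1-\delta+t)$ edges incident to the large clique, i.e.\ $2t+\lambda_r^h-\delta$ in all; since $1\leq t\leq\delta$ this is at most $\delta+\lambda_r^h$, and since $\delta\leq h$ one may absorb $\binom{h}{2}+\delta\leq\binom{h+1}{2}$, so that
\[
e(G)\leq\binom{N}{2}+(r-1)\binom{h+1}{2}+\lambda_r^h .
\]
Feeding this into Lemma \ref{lem::3.1}, it suffices to prove $\frac{\delta-1}{2}+\sqrt{2e(G)-n\delta+\frac{(\delta+1)^2}{4}}<N$; since $N-\frac{\delta-1}{2}>0$ for such large $n$ and the radicand is nonnegative (as $2e(G)\geq n\delta$), this is equivalent, after squaring, to
\[
\Big(N-\tfrac{\delta-1}{2}\Big)^{2}-2e(G)+n\delta-\tfrac{(\delta+1)^2}{4}>0 .
\]
Substituting the edge estimate and simplifying --- the $N^2$ terms cancel and $n\delta-N\delta=(r-1)(h+1)\delta$ --- the left-hand side is at least
\[
\big((r-1)(h+1)-1\big)\delta+2n-2(r-1)(h+1)-h(h+1)(r-1)-2\lambda_r^h .
\]
I would then apply $\delta\geq1$, then $n\geq(\lambda_r^h+1)(h+1)^2$, and finally $\lambda_r^h\geq r-1$; after expansion the expression collapses to a polynomial in $r$ and $h$ all of whose coefficients are nonnegative once $-r$ is absorbed into $rh^2$ (valid because $h\geq1$), hence it is strictly positive for $r\geq2$ and $h\geq\delta\geq1$. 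This yields $\rho(G)<N=n-(r-1)(h+1)$, completing the proof.

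The genuinely fiddly part is the arithmetic in the last two displays: one must pin down the edge count for \emph{every} member of the family --- which is exactly where the constraints $1\leq t\leq\delta$ and $h\geq\delta$ are used --- and then check that the threshold $n\geq(\lambda_r^h+1)(h+1)^2$ is precisely strong enough to force positivity after the crude substitutions $\delta\mapsto1$ and $\lambda_r^h\mapsto r-1$. Beyond Lemmas \ref{lem::2.3} and \ref{lem::3.1}, no further structural input appears to be required.
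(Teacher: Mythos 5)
Your proposal is correct and follows essentially the same route as the paper: the lower bound comes from the proper subgraph $K_{n-(r-1)(h+1)}$ via Lemma \ref{lem::2.3}, and the upper bound from the same edge count $e(G)\leq\binom{n-(r-1)(h+1)}{2}+(r-1)\binom{h+1}{2}+\lambda_r^h$ (absorbing $\binom{h}{2}+\delta\leq\binom{h+1}{2}$ using $\delta\leq h$) fed into Lemma \ref{lem::3.1}, with the identical chain of substitutions $\delta\mapsto 1$, $n\mapsto(\lambda_r^h+1)(h+1)^2$, $\lambda_r^h\mapsto r-1$ to verify positivity of the discriminant expression. Your added remarks on why the squaring step is legitimate are a small bonus of rigor over the paper's write-up, but the argument is the same.
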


Recall that $B^{\lambda^h_r,\delta}_n$ is a graph obtained from $(K_{n-(r-1)(h+1)-\lambda_r^h+r-2}\vee
K_{\lambda_r^h-r+2})\cup(r-2)K_{h+1}\cup (K_\delta\vee K_{h-\delta})\cup K_1$ by adding $\delta$ edges between $K_1$ and $K_\delta$, and $\lambda_r^h-r+2$ edges between some vertex in a copy of $K_{h+1}$ and all vertices in $K_{\lambda_r^h-r+2}$, and finally adding $r-2$ edges between a vertex in $K_{\lambda_r^h-r+2}$
and a vertex in each remaining $(r-3)$ copy of $K_{h+1}$ and $K_\delta$, respectively  For $X,Y\subset V(G)$, we denote by $E(X)$ the set of edges in $G[X]$,
and $e(X)=|E(X)|$. Let $E(X,Y)$ be the set of edges with one endpoint in $X$ and one endpoint in $Y$,  and let
$e(X,Y)=|E(X,Y)|$.
\begin{lem}\label{lem::3.4}
Suppose that $r$, $\delta$, $n$ and $h$ are four positive integers such that $n\geq (\lambda_r^h+1)(h+1)^2$,
$\lambda_r^h\geq r\geq 2$ and $h\geq\delta$. Let $G\in \mathcal{K}_{n,\delta}^{\lambda_r^h}$. Then
$$\rho(G)\leq\rho(B_n^{\lambda_r^h,\delta}),$$ with equality if and only if $G\cong B_n^{\lambda_r^h,\delta}$.
\end{lem}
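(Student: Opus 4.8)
The plan is to take a graph $G^\ast$ of maximum spectral radius in $\mathcal{K}_{n,\delta}^{\lambda_r^h}$, let $x$ be its Perron vector, and transform $G^\ast$ step by step---using only edge rotations within the fixed clique skeleton---until it is forced to be $B_n^{\lambda_r^h,\delta}$. Write $B=K_{n-(r-1)(h+1)}$ for the large clique, $P_1,\dots,P_{r-2}$ for the copies of $K_{h+1}$, and $D$ for the piece on the vertex set of $K_h\cup K_1$ (the clique $K_h$ together with the vertex $z$ of $K_1$ joined to $t$ vertices of $K_h$ and, accordingly, to $\delta-t$ vertices of $B$). The only freedom in $\mathcal{K}_{n,\delta}^{\lambda_r^h}$ is the value of $t$, where the edges at $z$ go, and how the $\lambda_r^h$ cross edges between $B$ and $P_1\cup\cdots\cup P_{r-2}\cup D$ are distributed; so the proof is a sequence of transformations showing each of these choices is forced. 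By Lemma \ref{lem::3.3}, $n-(r-1)(h+1)-1<\rho(G^\ast)<n-(r-1)(h+1)$, which is used throughout to keep the transformed graphs inside $\mathcal{K}_{n,\delta}^{\lambda_r^h}$ and to make weight comparisons strict.

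First I would show that, for each small piece, all cross edges to $B$ leave from a single vertex (a ``hub''). If two vertices $u,u'$ of $P_i$ (or of $D$) both have cross edges, then, since $B$ has far more than $\lambda_r^h$ vertices, after relabelling the $B$-endpoints (which are interchangeable except for the few already carrying cross edges) we may assume that all cross-neighbours of $u'$ except one retained for connectivity lie outside $N_{G^\ast}(u)$; taking $x_u\ge x_{u'}$ (by symmetry when equal) and applying Lemma \ref{lem::2.1} to move those edges onto $u$ strictly increases $\rho$, and the resulting graph is again in $\mathcal{K}_{n,\delta}^{\lambda_r^h}$ because $h\ge\delta$ keeps every degree at least $\delta$---a contradiction. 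The same argument, applied to the vertex $z$ of $K_1$ against a vertex of $K_h$, shows that $z$ carries no cross edge, hence $t=\delta$; and a comparison via Lemma \ref{lem::2.2} shows the $\delta$ edges at $z$ go to a common $\delta$-set $K_\delta\subseteq V(K_h)$.

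Next I would concentrate the cross edges. Comparing the hubs of two pieces that each carry at least two cross edges and rotating the surplus onto the heavier one (Lemma \ref{lem::2.1}) shows at most one small piece is ``heavy''; a further comparison---here a direct comparison of spectral radii, using that for $\delta<h$ the piece $D$ is a proper spanning subgraph of $K_{h+1}$---shows the heavy piece can be taken to be some $P_1$, with hub $u_1$ (when $r=2$ the only small piece is $D$ and it plays this role). Then $u_1$ is joined to a set $W\subseteq V(B)$ with $|W|=\lambda_r^h-r+2$, and each of the other $r-2$ pieces sends exactly one cross edge to $B$; rotating these single edges one at a time (each light hub has only its one cross edge, so its $B$-endpoint is free to move) puts them all on one vertex $w^\ast$, and rotating one edge $u_1w$ with $w\in W$ onto $w^\ast$ lets us take $w^\ast\in W$. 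Inside $D$, the cross edge leaving it should be incident to $K_\delta$ rather than to $K_h\setminus K_\delta$, since the vertices of $K_\delta$ (being adjacent to $z$) have strictly larger Perron weight (Lemma \ref{lem::2.2}). Collecting these conclusions, $G^\ast$ has exactly the structure of $B_n^{\lambda_r^h,\delta}$, so $G^\ast\cong B_n^{\lambda_r^h,\delta}$; since each transformation strictly increases $\rho$ unless the graph already has the stated form, the equality case follows as well.

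The main obstacle, and where the hypotheses $h\ge\delta$ and $n\ge(\lambda_r^h+1)(h+1)^2$ are really needed, is the bookkeeping at each step: one must verify both that the Perron-weight inequality required to invoke Lemma \ref{lem::2.1} actually holds (the delicate cases being the comparison between the hub of a $K_{h+1}$-copy and the hub of $D$, and between the vertices of $W$ and the vertex $w^\ast$), and that the rotated graph still lies in $\mathcal{K}_{n,\delta}^{\lambda_r^h}$---in particular that its minimum degree is still exactly $\delta$ (witnessed by $z$) and that its $h$-extra $r$-component edge-connectivity is still exactly $\lambda_r^h$, which holds because the cheapest edge set separating the graph into $r$ parts of size at least $h+1$ is always the set of all $\lambda_r^h$ cross edges.
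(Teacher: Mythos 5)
Your overall strategy (take an extremal graph in $\mathcal{K}_{n,\delta}^{\lambda_r^h}$ and force its structure by local modifications that strictly increase the spectral radius) is the same as the paper's, but the central step of your argument does not go through as described. You propose to concentrate the cross edges of each small piece onto a single hub by applying Lemma \ref{lem::2.1}, after ``relabelling the $B$-endpoints'' so that the cross-neighbours of the lighter vertex $u'$ lie outside $N_{G^\ast}(u)$. That relabelling is not a legitimate operation: $G^\ast$ is one fixed graph, and in the extremal configuration the neighbourhoods are nested the wrong way for your purposes. Ordering the vertices of each clique by Perron weight, extremality forces $N_{G^\ast}(u_i^{j+1})\subseteq N_{G^\ast}[u_i^{j}]$ (this is the one place where single-edge applications of Lemma \ref{lem::2.1} do work), so the $B$-endpoints of the cross edges form initial segments and the cross-neighbourhood of the lighter vertex is \emph{contained} in that of the heavier one. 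Consequently $N(u')\setminus N(u)$ contains no $B$-vertices, Lemma \ref{lem::2.1} is inapplicable, and the modification you actually need (replace $u'w$ by $uw'$ with $w\neq w'$) changes both endpoints of an edge. The same obstruction blocks your second concentration step, where the surplus cross edges at the hubs $u_3^1,\dots,u_r^1$ must be transferred to $u_2^1$ but attached to new vertices of the big clique.

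The paper overcomes this with a genuinely heavier tool that your proposal has no substitute for: it compares the old and new graphs via $y^{T}(\rho''-\rho')x=y^{T}\bigl(A(G'')-A(G')\bigr)x$, where $x$ and $y$ are the Perron vectors of $G'$ and $G''$ respectively, and proves positivity by extracting explicit upper and lower bounds on the relevant eigenvector entries from the eigenvalue equations, using Lemma \ref{lem::3.2} together with the localization $n-(r-1)(h+1)-1<\rho<n-(r-1)(h+1)$ supplied by Lemma \ref{lem::3.3} (this is where $n\ge(\lambda_r^h+1)(h+1)^2$ is really spent). Since both of your concentration moves rest on an inapplicable rotation lemma and an invalid relabelling, the key inequalities asserting that these moves strictly increase $\rho$ are unjustified; this is a genuine gap in the argument, not a bookkeeping issue.
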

\begin{proof}
Suppose that $G'\in\mathcal{K}^{\lambda_r^h}_{n,\delta}$ is a graph that attains the maximum spectral radius, where
$n\geq(\lambda_r^h+1)(h+1)^2$ and $\lambda_r^h\geq r\geq 2$. Then
\begin{equation}\label{equ::14}
\begin{aligned}
\rho(G)\leq \rho(G').
\end{aligned}
\end{equation}
For the convenience of writing, we will write $n_1=n-(r-1)(h+1)$. Let $V(G')=V(C_1)\cup V(C_2)\cup\cdots\cup V(C_r)$ such
that $V(C_1)=V(K_{n_1})=\{v_1,v_2,\ldots,v_{n_1}\}$, $V(C_i)=V(K_{h+1})=\{u_i^1,u_i^2,\ldots,u_i^{h+1}\}$ for $2\leq
i\leq r-1$ and $V(C_r)=V(K_{h}\cup K_1)=\{u_r^1,u_r^2,\ldots,u_r^{h+1}\}$. Suppose that $x$ is the Perron vector of $A(G')$ and
$\rho(G') =\rho'$. Without loss of generality, assume that $x_{v_{l+1}}\leq x_{v_{l}}$, $x_{u^1_{i+1}}\leq x_{u^1_{i}}$
and $x_{u_i^{j+1}}\leq x_{u_i^{j}}$ for $1\leq l\leq n_1-1$, $2\leq i\leq r$ and $1\leq j\leq h$. Since $G'$ is a connected
graph and $e(C_i,C_j)=0$ for $2\leq i<j\leq r$, it follows that $|N_{G'}(C_i)|=|N_{C_1}(C_i)|\geq 1$ for $2\leq i\leq r$.
Suppose that $d_{G'}(u_r^q)=\delta$, where $1\leq q\leq h+1$.
We first claim that $q\neq 1$. If not, let $q=1$, $e(u_r^{1},C_1)=a_1$ and $e(u_r^{h+1},C_1)=a_2$. Then $a_1\geq
a_2$ due to $x_{u_r^{1}}\geq x_{u_r^{h+1}}$ and $N_{C_r}(u_r^1)\subset N_{C_r}(u_r^{h+1})$. Suppose that
$$G_1=G'+\{u_r^1u_r^j \mid \delta-a_1+1\leq j\leq h+1\}-\{u_r^{h+1}u_r^j\mid \delta-a_2+1\leq j\leq h\}.$$ Then
$\rho(G_1)>\rho(G')$ by Lemma \ref{lem::2.1}, a contradiction.
We next assert that $v_1\in N_{C_1}(C_i)$ for $2\leq i\leq r$.
Otherwise, there exists some $C_j$ such that $v_1\notin N_{C_1}(C_j)$, where $2\leq j\leq r$. Let $v_t\in N_{C_1}(C_j)$,
where $2\leq t\leq n_1$. Then we assume that $G_2=G'-\{v_tv\mid v\in V(C_j)\}+\{v_1v\mid v\in V(C_j)\}.$ Hence, $G_2\in
\mathcal{K}_{n,\delta}^{\lambda_r^h}$ and $\rho(G_2)>\rho(G')$ by Lemma \ref{lem::2.1}, a contradiction.
This implies that $v_1\in N_{C_1}(C_i)$ for all $2\leq i\leq r$.  In the following, we claim that, except for the vertex
$u_r^q$, $N_{G'}(v_{l+1})\subseteq N_{G'}[v_{l}]$ and $N_{G'}(u_i^{j+1})\subseteq N_{G'}[u_i^{j}]$ for $1\leq l\leq n_1-1$,
$2\leq i\leq r$ and $1\leq j\leq h$. Otherwise, suppose that there exists $i$, $j$ with $i<j$ such that
$N_{G'}(v_{j})\nsubseteq N_{G'}[v_i]$. Let $w\in N_{G'}(v_{j})\backslash N_{G'}[v_i]$ and $G_3=G'-wv_j+wv_i$. It is clear that
$G_3\in \mathcal{K}_{n,\delta}^{\lambda_r^h}$.
Note that $x_{v_{j}}\leq x_{v_{i}}$. Then $\rho(G_3)>\rho(G')$ by Lemma \ref{lem::2.1}, a contradiction.
This implies that $N_{G'}(v_{l+1})\subseteq N_{G'}[v_{l}]$ for $1\leq l\leq n_1-1$. Similarly,
$N_{G'}(u_i^{j+1})\subseteq N_{G'}[u_i^{j}]$ for $2\leq i\leq r$ and $1\leq j\leq h$. Obviously, $v_1\in N_{C_1}(u_i^1)$ for
$2\leq i\leq r$.
Let $d_{C_1}(u_i^1)=t_i$ and $d_{C_i}(v_1)=b_i$ for $2\leq i\leq r$. Recall that $d_{C_1}(u_i^1)\geq 1$ for $1\leq i\leq r$,
which means $u_i^1v_1\in E(G')$. Thus, $b_i\geq 1$. If $t_2=\lambda_r^h-r+2$, then $G'\cong B^{\lambda_r^h,\delta}_{n}$.
Furthermore, by (\ref{equ::14}), we deduce that
$$\rho(G)\leq \rho(B^{\lambda_r^h,\delta}_{n}),$$
with equality if and only if $G\cong B^{\lambda_r^h,\delta}_{n}$, as required. Next, we consider the case of $t_2\leq
\lambda_r^h-r+1$. Let $e(C_1,C_i)=p_i$ and $d_{C_1}(u_i^2)=s_i$ for $2\leq i\leq r$. Then we assert that $t_i=p_i$. If not,
there exists $l$ such that $t_l<p_l$ where $2\leq l\leq r$. Furthermore,  $N_{G'}[v_{n_1}]=V(C_1)$ due to
$n_1=n-(r-1)(h+1)\geq (\lambda_r^h+1)(h+1)^2-(r-1)(h+1)>\lambda_r^h$. Then
\begin{align*}
\rho' x_{v_{n_1}}&=\sum_{i=1}^{n_1-1}x_{v_i},\\
\rho'
x_{v_1}&\leq\sum_{i=2}^{n_1}x_{v_i}+\sum_{i=2}^{r}b_ix_{u_i^1}\leq\sum_{i=2}^{n_1}x_{v_i}+b_lx_{u_l^1}+(\lambda_r^h-p_l)x_{u_2^1},\\
\rho' x_{v_{p_l}}&\geq\sum_{i=1}^{p_l-1}x_{v_i}+\sum_{i=p_l+1}^{n_1}x_{v_i},\\
\rho' x_{u_2^1}&=\sum_{i=1}^{t_2}x_{v_i}+\sum_{i=2}^{b_2}x_{u_2^i}+(h+1-b_2)x_{u_2^{h+1}},
\end{align*}
from which we obtain that $x_{v_1}\leq x_{v_{p_l}}+\frac{\lambda_r^h-p_l+b_l}{\rho'+1}x_{u_2^1}$ and
$x_{v_{n_1}}\geq\frac{\rho'-h}{\rho'-n_1+t_2+1}x_{u_2^1}$.

Suppose that $E_1=\{u_l^1v_i\mid t_l+1\leq i\leq p_l\}$ and $E_2=\{u_l^iv_j\mid 2\leq i\leq b_l, 1\leq j\leq s_l\}$, where
$2\leq l\leq r$.
Let $G''=G'-E_2+E_1$. Clearly, $G'' \in\mathcal{K}_{n,\delta}^{\lambda^h_r}$. Let $y$ be the Perron vector of $A(G'')$ and $\rho(G'')=\rho''$. By symmetry,
$y_{v_i}=y_{v_{p_2+1}}$ for $p_2+2\leq i\leq n_1$, and except for the vertex $u_r^q$,  $y_{u_i^j}=y_{u_i^2}$ for
$2\leq i\leq r$ and $3\leq j\leq h+1$. Then
$$
\left\{\begin{array} { l }
{ \rho'' y_{v_1}\leq\sum_{i=2}^{n_1}y_{v_i}+y_{u_l^1}+(\lambda_r^h-p_l)y_{u_2^1} , } \\
{ \rho''y_{v_{p_l}}\geq \sum_{i=1}^{p_l-1}y_{v_i}+\sum_{i=p_l+1}^{n_1}y_{v_i}+y_{u_l^1} , }\\
{\rho''y_{u_l^1}=\sum_{i=1}^{p_l}y_{v_i}+\sum_{i=2}^{h+1}y_{u_l^i} ,}

\end{array} \quad \left\{\begin{array}{l}
\rho''y_{u_l^2}=y_{u_l^1}+\sum_{i=3}^{h+1}y_{u_l^i},\\
\rho''y_{v_{n_1}}=\sum_{i=1}^{n_1-1}y_{v_i}, \\
\rho''y_{u_2^1}=\sum_{i=1}^{p_2}y_{v_i}+(h+1)y_{u_2^2} ,
\end{array}\right.\right.
$$
and hence, $y_{v_1}\leq y_{v_{p_l}}+\frac{\lambda_r^h-p_l}{\rho''+1}y_{u_2^1},$
$y_{u_l^1}\geq y_{u_l^2}+\frac{p_l}{\rho''+1}y_{v_{p_l}}$ and
$y_{v_{n_1}}\geq \frac{\rho''-h+1}{\rho''-n_1+p_2+1}y_{u_{2}^1}$.
Combining this with $x_{v_1}\leq x_{v_{p_l}}+\frac{\lambda_r^h-p_l+b_l}{\rho'+1}x_{u_2^1}$ and
$x_{v_{n_1}}\geq\frac{\rho'-h}{\rho'-n_1+t_2+1}x_{u_2^1}$, we have
\begin{align*}
&~~~y^T(\rho''-\rho')x=y^T(A(G'')-A(G'))x \\
&=\sum_{u_l^1v_i\in E_1}(x_{u_l^1}y_{v_i}+x_{v_i}y_{u_l^1})-\sum_{u_l^jv_i\in E_2}(x_{u_l^j}y_{v_i}+x_{v_i}y_{u_l^j})\\
&\geq(p_l-t_l)(x_{u_l^1}y_{v_{p_l}}+x_{v_{p_l}}y_{u_l^1}-x_{u_l^1}y_{v_1}-x_{v_1}y_{u_l^2})\\
&\geq (p_l-t_l)\Big(x_{u_l^1}(y_{v_{p_l}}-y_{v_1})+x_{v_{p_l}}y_{u_l^1}-x_{v_1}y_{u_l^2}\Big)\\
&\geq(p_l-t_l)\Big(x_{u_l^1}(y_{v_{p_l}}-y_{v_{p_l}}-\frac{\lambda_r^h-p_l}{\rho''+1}y_{u_2^1})+x_{v_{p_l}}(y_{u_l^2}+\frac{p_l}{\rho''+1}y_{v_{p_l}})-(x_{v_{p_l}}+\frac{\lambda_r^h-p_l+b_l}{\rho'+1}x_{u_2^1})y_{u_l^2}\Big)\\
&=(p_l-t_l)\Big(\frac{p_l}{\rho''+1}x_{v_{p_l}}y_{v_{p_l}}-\frac{\lambda_r^h-p_l+b_l}{\rho'+1}x_{u_2^1}y_{u_l^2}-\frac{\lambda_r^h-p_l}{\rho''+1}x_{u_l^1}y_{u_2^1}\Big)\\
&\geq(p_l-t_l)\Big(\frac{p_l}{\rho''+1}x_{v_{n_1}}y_{v_{n_1}}-\frac{\lambda_r^h-p_l+b_l}{\rho'+1}x_{u_2^1}y_{u_l^2}-\frac{\lambda_r^h-p_l}{\rho''+1}x_{u_l^1}y_{u_2^1}\Big)\\
&\geq (p_l-t_l)\Big(\frac{p_l(\rho'-h)(\rho''-h+1)}{(\rho''\!+\!1)(\rho'\!-\!n_1\!+\!t_2\!+\!1)(\rho''\!-\!n_1\!+\!p_2\!+\!1)}x_{u_2^1}y_{u_2^1}\!-\!\big(\frac{\lambda_r^h\!-\!p_l\!+\!b_l}{\rho'+1}+\frac{\lambda_r^h-p_l}{\rho''+1}\big)x_{u_2^1}y_{u_2^1}\Big)\\
&\geq
(p_l-t_l)\Big(\frac{p_l(\rho'-h)(\rho''-h+1)}{(\rho''+1)(t_2+1)(p_2+1)}-\frac{(\lambda_r^h-p_l)(\rho'+1)+(\lambda_r^h-p_l+b_l)(\rho''+1)}{(\rho''+1)(\rho'+1)}\Big)x_{u_2^1}y_{u_2^1}\\
&\geq(p_l-t_l)\Big(\frac{p_l(\rho'-h)(\rho''-h+1)-2\lambda_r^h(\rho'+2)}{(\rho''+1)(t_2+1)(p_2+1)}\Big)x_{u_2^1}y_{u_2^1}\\
&\geq(p_l-t_l)\Big(\frac{2(\rho'+2)(\rho''-2h-1)-2\lambda_r^h(\rho'+2)}{(\rho''+1)(t_2+1)(p_2+1)}\Big)x_{u_2^1}y_{u_2^1}\quad(\text{by
Lemma \ref{lem::3.2}} \text{ and }~ p_l>t_l\geq1)\\
&\geq(p_l-t_l)\Big(\frac{2(\rho'+2)(\rho''-2h-\lambda_r^h-1)}{(\rho''+1)(t_2+1)(p_2+1)}\Big)x_{u_2^1}y_{u_2^1}\\
&>0\quad(\text{since } \rho''>n-(r-1)(h+1)-1,~n>(\lambda_r^h+1)(h+1)^2\text{ and }~  p_l>t_l).
\end{align*}
It follows that
\begin{align}
\rho''>\rho', \label{equ::15}
\end{align}
a contradiction. This implies that $d_{C_1}(u_i^1)=t_i=p_i\geq 1$,
$\sum_{i=2}^{r}=t_i=\lambda_r^h$ and $d_{C_1}(u_i^j)=0$, where $2\leq i\leq r$ and $2\leq j\leq h+1$.

In this situation, if $p_2=\lambda_r^h-r+2$, then $G'\cong B^{\lambda_r^h,\delta}_{n}$. Combining this with (\ref{equ::14}), we can
deduce that
$$\rho(G)\leq \rho(B^{\lambda_r^h,\delta}_{n}),$$
with equality if and only if $G\cong B^{\lambda_r^h,\delta}_{n}$, as required. In the following, we consider the case of
$p_2\leq \lambda_r^h-r+1$. Note that $y_{v_i}=y_{v_{r+1}}$ for $p_2+2\leq i\leq n_1$. Then
\begin{align*}
\rho''
y_{v_{p_2+1}}&=\sum_{i=1}^{p_2}y_{v_i}+\sum_{i=p_2+2}^{n_1}y_{v_{p_2+1}}=\sum_{i=1}^{p_2}y_{v_i}+(n_1-p_2-1)y_{v_{p_2+1}},\\
\rho'' y_{v_2}&\geq y_{v_1}+\sum_{i=3}^{p_2}y_{v_i}+(n_1-p_2)y_{v_{p_2+1}}+(r-1)y_{u_2^1},\\
\rho''y_{u_2^1}&=\sum_{i=1}^{p_2}y_{v_i}+hy_{u_2^2},
\end{align*}
and hence,
\begin{align}
y_{v_2} &\geq y_{v_{p_2+1}}+\frac{r-1}{\rho''+1}y_{u_2^1},\label{equ::16}\\
y_{v_{p_2+1}} &>\frac{\rho''-h}{\rho''-n_1+p_2+1}y_{u_2^1}.\label{equ::17}
\end{align}
Suppose that $E_1=\{u_2^1v_i\mid p_2+1\leq i\leq \lambda_r^{h}-r+2\}$ and $E_2=\{u_i^1v_j\mid 3\leq i\leq r, 2\leq j\leq
p_i\}$. Let $G'''=G''+E_1-E_2$. Clearly, $G'''\cong B^{\lambda_r^h,\delta}_{n}$. Let $z$ be the Perron vector of $A(G''')$ and
$\rho(G''')=\rho'''$. Then $z_{u_i^1}>z_{u_i^2}$ for $2\leq i\leq r$, $z_{v_j}=z_{v_2}$ for $3\leq j\leq
n_1$, and except for the vertex $u_r^q$, $z_{u_i^l}=z_{u_i^2}$ for $2\leq i\leq r$ and $3\leq l\leq h+1$, and hence
\begin{align}
\rho'''z_{u^1_3}&\geq z_{v_1}+hz_{u^2_3},\label{equ::18}\\
\rho'''z_{u^1_2}&=\sum_{i=1}^{\lambda_r^h-r+2}z_{v_i}+hz_{u^2_2},\label{equ::19}\\
\rho'''z_{v_2}&=z_{v_1}+\sum_{i=3}^{n_1}z_{v_i}=z_{v_1}+(n_1-2)z_{v_2}.\label{equ::20}
\end{align}
From (\ref{equ::18})-(\ref{equ::20}), we get $z_{u_2^1}\geq\frac{2}{\rho'''}z_{v_2}+z_{u_3^1}$ and
 $z_{v_2}>\frac{\rho'''-h}{\rho'''-n_1+2}z_{u_3^1}.$
Note that $y_{u_1^1}\geq y_{u_i^1}$ for $2\leq i\leq r$, $y_{v_1}\geq y_{v_j}$ for $2\leq j\leq t$. Recall that
$\rho'''-\rho''>-1$ and $\rho''<n_1$ by Lemma \ref{lem::3.3}. Combining this with (\ref{equ::16}) and (\ref{equ::17}), we have
\begin{align*}
&~~~z^T(\rho'''-\rho'')y=z^T(A(G''')-A(G''))y \\
&=\sum_{u_2^1v_i\in E_1}(y_{u_2^1}z_{v_i}+y_{v_i}z_{u_2^1})-\sum_{u_i^1v_j\in E_2}(y_{u_i^1}z_{v_j}+y_{v_j}z_{u_i^1})\\
&\geq(\lambda_r^h-r-p_2+2)(y_{u_2^1}z_{v_2}+y_{v_{p_2+1}}z_{u_2^1}-y_{u_2^1}z_{v_2}-y_{v_2}z_{u_3^1})\\
&= (\lambda_r^h-r-p_2+2)(y_{v_{p_2+1}}z_{u_2^1}-y_{v_2}z_{u_3^1})\\
&\geq(\lambda_r^h-r-p_2+2)\Big(y_{v_{p_2+1}}z_{u_3^1}+\frac{2(\rho''-h)y_{u_2^1}z_{v_2}}{(\rho''-n_1+p_2+1)\rho'''}-y_{v_{p_2+1}}z_{u_3^1}-\frac{(r-1)y_{u_2^1}z_{u_3^1}}{\rho''+1}\Big)\\
&\geq\frac{\lambda_r^h-r-p_2+2}{\rho''+1}\Big(\frac{2(\rho''-h)y_{u_2^1}z_{v_2}}{p_2+1}-(r-1)y_{u_2^1}z_{u_3^1}\Big)
\quad(\text {since }\rho''+1>\rho'''\text{ and }\rho''<n_1)\\
&\geq\frac{\lambda_r^h-r-p_2+2}{\rho''+1}\Big(\frac{2(\rho'''-h)(\rho''-h)}{(p_2+1)(\rho'''-n_1+2)}-(r-1)\Big)y_{u_2^1}z_{u_3^1}\\
&\geq\frac{\lambda_r^h-r-p_2+2}{\rho''+1}\Big(\frac{(\rho'''-h)(\rho''-h)}{p_2+1}-(r-1)\Big)y_{u_2^1}z_{u_3^1}\quad(\text
{since }\rho''-n_1+2<2)\\
&>0\quad(\text {since }\rho'',~\rho'''>n-(r-1)(h+1)-1\text{ and }p_2\leq \lambda_r^h-r+1).
\end{align*}
It follows that $\rho'''>\rho''$. Combining this with (\ref{equ::15}), we have $\rho'''>\rho'$, a contradiction. Thus,
$$\rho(G)\leq \rho(B^{\lambda_r^h,\delta}_{n}),$$
 with equality if and only if $G\cong B^{\lambda_r^h,\delta}_{n}$.  This completes the proof.
\end{proof}

\begin{lem}\label{lem::3.5}
Let $a$ and $b$ be two positive integers. If $a\geq b\geq 3$, then
$$\binom{a}{2}+\binom{b}{2}<\binom{a+1}{2}+\binom{b-1}{2}.$$
\end{lem}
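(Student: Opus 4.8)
The plan is to prove the inequality by a direct computation of the difference between the two sides, exploiting the elementary identity $\binom{m+1}{2}-\binom{m}{2}=m$, valid for every integer $m\geq 1$.

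First I would rewrite the claimed inequality as
\[
\left(\binom{a+1}{2}+\binom{b-1}{2}\right)-\left(\binom{a}{2}+\binom{b}{2}\right)>0 .
\]
Grouping the terms coming from $a$ and from $b$ separately, the left-hand side equals
\[
\left(\binom{a+1}{2}-\binom{a}{2}\right)-\left(\binom{b}{2}-\binom{b-1}{2}\right)=a-(b-1)=a-b+1 .
\]
Since $a\geq b\geq 3$ are integers, we have $a-b+1\geq 1>0$, which yields the desired strict inequality. The hypothesis $b\geq 3$ is used only to guarantee that $\binom{b-1}{2}$ is a genuine binomial coefficient (i.e. $b-1\geq 2$); the stated inequality in fact holds as soon as $a\geq b$.

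There is essentially no obstacle here. The only mild point to keep in mind is that the conclusion is a \emph{strict} inequality, and this is exactly what the hypothesis $a\geq b$ provides, since it forces $b-1<a$ strictly; the rest is a one-line arithmetic verification.
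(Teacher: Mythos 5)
Your proof is correct and is essentially identical to the paper's: both compute the difference of the two sides via $\binom{m+1}{2}-\binom{m}{2}=m$ and reduce it to $a-b+1>0$. No further comment is needed.
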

\renewcommand\proofname{\bf Proof}
\begin{proof}
Since $a\geq b\geq 3$, we have $$\binom{a+1}{2}+\binom{b-1}{2}-\binom{a}{2}-\binom{b}{2}=a-b+1>0,$$
as required.
\end{proof}

\renewcommand\proofname{\bf Proof of Theorem \ref{thm::1.3}}
\begin{proof}
Suppose that $G'\in \mathcal{B}_{n,\delta}^{\lambda_r^h}$ is a graph that attains the maximum spectral radius. Thus,
$$\rho(G)\leq\rho(G').$$
Since $n\geq (\lambda_r^h+1)(h+1)^2$,
$r\geq2$ and $\delta\leq h$, there exists some $F \subseteq E(G')$ with
$\lambda_r^h=|F|$ such that $G\!-\!F$ contains $p$ components $C_1, C_2, \ldots, C_p~(p \geq r)$. We assert that $p=r$. Otherwise, $p>r$ and $F'=F\!-\!E(G'\!-\!C_p,C_p)$. Obviously,
$\lambda_r^h=|F'|<|F|$, a contradiction. Let $|V(C_i)|=n_i$ for $1 \leq i \leq r$. Without loss of generality, assume that
$n_1 \geq n_2 \geq$ $\cdots \geq n_r$.
Note that $B_n^{\lambda_r^h,\delta}\in \mathcal{B}_{n,\delta}^{\lambda_r^h}$ and $B_n^{\lambda_r^h,\delta}$ contains
$K_{n-(r-1)(h+1)}$ as a proper subgraph. Thus,
\begin{align}
\rho(G') \geq \rho\big(B_n^{\lambda_r^h,\delta}\big)>\rho\big(K_{n-(r-1)(h+1)}\big)=n-(r-1)(h+1)-1.\label{equ::21}
\end{align}
We first have the following Claim.

{\flushleft\bf Claim 1.} $n_i=h+1$ for $2 \leq i \leq r$.

Otherwise, there exists some $j$ $(2 \leq j \leq r)$ such that $n_j \geq h+2$.
Notice that $1\leq\delta\leq h$. By Lemma \ref{lem::3.5},
\begin{align*}
e(G')&=\sum_{i=1}^r e(C_i)+\sum_{1\leq i<j\leq r}e(C_i,C_j)=\sum_{i=1}^r e(C_i)+\lambda_r^h\\
&\leq\binom{n-(r-1)(h+1)-1}{2}+\binom{h+2}{2}+(r-3)\binom{h+1}{2}+\binom{h}{2}+\delta+\lambda_r^h\\
&=\frac{\big(n-(r-1)(h+1)-1\big)\big(n-(r-1)(h+1)-2\big)}{2}+\frac{(h+1)(h+2)}{2}+\\
&~~~~\frac{(r-3)(h+1)h}{2}+\frac{h(h+1)}{2}+\delta+\lambda_r^h.\\
\end{align*}
Note that $G'$ is connected. Then $\lambda_r^h\geq r-1$. Combining $n \geq (\lambda_r^h+1)(h+1)^2$, $r\geq 2$ and $h \geq
\delta\geq1$,
we get
\begin{align*}
&~~\Big((n-(r-1)(h+1)-1)-\frac{\delta-1}{2}\Big)^2-2e(G')+n\delta-\frac{(\delta+1)^2}{4}\\
&=\Big((n-(r-1)(h+1)-1)-\frac{\delta-1}{2}\Big)^2-(h+1)(h+2)-h(h+1)(r-3)-2\lambda_r^h-\\
&~~(n-(r-1)(h+1)-1)(n-(r-1)(h+1)-2)-h(h+1)-2\delta+n\delta-\frac{(\delta+1)^2}{4}\\
&=\big((r-1)(h+1)-3\big)\delta-rh^2+h^2-3rh-2\lambda_r^h+3h+2n-2r-2\\
&\geq (r-1)(h+1)-3-rh^2+h^2-3rh-2\lambda_r^h+3h+2(\lambda_r^h+1)(h+1)^2-2r-2\\
&\quad(\text {since } h\geq\delta \geq 1, r\geq2 \text { and } n \geq (\lambda_r^h+1)(h+1)^2).\\
&=2\lambda_r^hh^2-rh^2+4\lambda_r^hh+3h^2-2rh+6h-r-3\\
&\geq 2(r-1)h^2-rh^2+4(r-1)h+3h^2-2rh+6h-r-3\quad(\text {since } \lambda_r^h\geq r-1)\\
&=rh^2+h^2+2rh+2h-r-3\\
&\geq0\quad(\text {since } h\geq \delta\geq1 \text { and } r\geq 2).
\end{align*}
Furthermore, by Lemma \ref{lem::3.1}, we obtain that
\begin{align*}
\rho(G')& \leq \frac{\delta-1}{2}+\sqrt{2e(G')-n\delta+\frac{(\delta+1)^2}{4}}< n-(r-1)(h+1)-1,
\end{align*}
which contradicts (\ref{equ::21}). So,  $n_i=h+1$ for $2 \leq i \leq r$, and hence $n_1=n-(r-1)(h+1)$, as desired.

Choose a vertex $u\in V(G')$ such that $d_{G'}(u)=\delta$. Due to the uncertainty of the minimum degree vertex $u$, we will now present the following  Claim.

{\flushleft\bf Claim 2.} $u\notin C_1$.

Otherwise, let $u\in C_1$. Combining this with $\delta\leq h$ and Lemma 3.5, then
\begin{align*}
e(G')&=\sum_{i=1}^r e(C_i)+\sum_{1\leq i<j\leq r}e(C_i,C_j)=\sum_{i=1}^r e(C_i)+\lambda_r^h\\
&\leq\binom{n-(r-1)(h+1)-1}{2}+(r-1)\binom{h+1}{2}+\delta+\lambda_r^h\\
&=\frac{\big(n-(r-1)(h+1)-1\big)\big(n-(r-1)(h+1)-2\big)}{2}+\frac{(r-1)(h+1)h}{2}+\delta+\lambda_r^h\\
&<\frac{\big(n-(r-1)(h+1)-1\big)\big(n-(r-1)(h+1)-2\big)}{2}+\frac{(h+1)(h+2)}{2}+\\
&~~~~\frac{(r-3)(h+1)h}{2}+\frac{h(h+1)}{2}+\delta+\lambda_r^h.\\
\end{align*}
Repeat the above process of Claim 1,
\begin{align*}
\rho(G')& \leq \frac{\delta-1}{2}+\sqrt{2e(G')-n\delta+\frac{(\delta+1)^2}{4}}< n-(r-1)(h+1)-1,
\end{align*}
which contradicts (\ref{equ::21}). Thus, $u\notin C_1$.

Without loss of generality, assume that $u\in C_r$. By Claims 1, 2 and the maximality of $\rho(G')$,  we can obtain $G'[C_1]
\cong K_{n-(r-1)(h+1)}$, $G'[C_i] \cong K_{h+1}$ for $2\leq i\leq r-1$ and $G'[C_r\backslash u]\cong K_{h}$. Furthermore,
$\sum _{1\leq i<j\leq
r}e(C_i,C_j)=\lambda^h_r$.

{\flushleft\bf Claim 3.} $e(C_i,C_j)=0$ for $2\leq i<j\leq r$.

Otherwise, there exist some $i$ and $j$ $(2\leq i<j\leq r)$ such that $e(C_i,C_j)>0$. Suppose that $V(C_1)=\{v_1, v_2, \ldots, v_{n_1}\}$ and $V(C_i)=\{u_i^1,u_i^2,\ldots,u_i^{h+1}\}$ for $2 \leq i \leq r$.
Let $x$ be the Perron vector of $A(G')$. Without loss of generality, suppose that $x_{u_2^1}=\max\{x_v\mid v\in V(G')\backslash V(C_1)\}$, $x_{v_1} \geq x_{v_2} \geq \cdots \geq x_{v_{n_1}}$
 and $\sum _{3\leq i\leq r}e(C_2,C_i)=t$.  Then $t\leq \lambda_r^h-1$ and
\begin{align*}
\rho(G') x_{v_{n_1}} & \geq \sum_{i=1}^{n_1-1} x_{v_i}, \\
\rho(G') x_{u_2^1} & =\sum_{i=2}^r\sum_{v \in N_{C_i}(u_2^1)} x_v+\sum_{v \in N_{C_1}(u_2^1)} x_v \leq hx_{u_2^1}+t
x_{u_2^1}+\sum_{i=1}^{\lambda_r^h-t} x_{v_i}\\
&=(h+t) x_{u_2^1}+\sum_{i=1}^{\lambda_r^h-t} x_{v_i},
\end{align*}
and hence,
\begin{align*}
(\rho(G')-h-t)(x_{v_{n_1}}-x_{u_2^1}) \geq \sum_{i=1}^{n_1-h-t-1} x_{v_i}-\sum_{i=1}^{\lambda_r^h-t}
x_{v_i}=\sum_{i=\lambda_r^h-t+1}^{n_1-h-t-1} x_{v_i}>0
\end{align*}
due to $n \geq (\lambda_r^h+1)(h+1)^2$ and $\lambda_r^h\geq r-1$. Since $K_{n-(r-1)(h+1)}\subset G'$, it follows that
\begin{align*}
\rho(G')&>\rho(K_{n-(r-1)(h+1)})\\
&=n-(r-1)(h+1)-1 \\
&\geq (\lambda_r^h+1)(h+1)^2-\lambda_r^h(h+1)-1\\
&= (h(\lambda_r^h+1)+1)(h+1)-1\\
&>\lambda_r^h+h\\
&> h+t,
\end{align*}
where the inequalities follow from the fact that $n \geq (\lambda_r^h+1)(h+1)^2$, $\delta\leq h$ and $\lambda_r^h \geq r-1$, and hence
$x_{v_{n_1}}>x_{u_2^1}$. This suggests that $x_{v_i}>x_{u_2^1}$ for $1 \leq i \leq n_1$. We assert that $e(C_i,C_j)=0$ for
all $2\leq i<j\leq r$. Otherwise, there exists $vw\in E(G')$ such that $v\notin C_1$ and $w\notin C_1$. Take $G^*=G'-\{vw\}+\{v_1
v\}$. Then $G^*\in \mathcal{B}_{n,\delta}^{\lambda_r^h}$. Notice that $x_{v_1}
\geq x_v$ for $v \in V(G') \backslash\{v_1\}$. Thus, $\rho(G^*)>\rho(G')$ by Lemma \ref{lem::2.1}, a
contradiction. This implies that $e(C_i,C_j)=0$ for $2\leq i<j\leq r$.

Following this, we will prove that $G' \cong B_n^{\lambda_r^h, \delta}$. If $\lambda_r^h=r-1$, then
$e(C_1,C_i)=1$ for $2\leq i\leq r$ due to $G'$ is connected and $e(C_i,C_j)=0$ for all $2\leq i<j\leq r$. Suppose that
$N_{C_i}(C_1)=\{u_i^1\}$ for $2\leq i\leq r$. Then we assert that $N_{C_1}(u_i^1)=\{v_1\}$ for $2 \leq i \leq r$. Otherwise,  there exists some $v_p$ such that $v_pu_k^1\in
E(G')$, where $2\leq p\leq n_1$. Take $G^{**}=G'-\{v_pu_k^1\}+\{v_1 u_k^1\}$. Then $G^{**}\in \mathcal{B}_{n,\delta}^{\lambda_r^h}$. Notice that $x_{v_1} \geq x_v$ for $v \in V(G')
\backslash\{v_1\}$. Thus, $\rho(G^{**})>\rho(G')$ by Lemma \ref{lem::2.1}, a contradiction. Recall that
$u\in V(C_r)$. Similar to the proof of Lemma \ref{lem::3.4}, we can obtain $u\neq u_r^1$. Thus, $G \cong
B_n^{\lambda_r^h, \delta}$. We consider $\lambda_r^h\geq r$ in the following.  Recall that $N_{G'}(C_i)\subset
V(C_1)$ for $2\leq i\leq r$. Therefore, $G' \in \mathcal{K}_{n, \delta}^{\lambda_r^h}$. Combining this with Lemma
\ref{lem::3.4}, we can deduce that $G' \cong B_n^{\lambda_r^h, \delta}$, as required.

This completes the proof.

\end{proof}

\section{ Concluding remark}
In this paper, we have determined that $B_n^{\lambda_r^h, \delta}$ is the unique graph attaining the maximum spectral radius in $\mathcal{B}_{n,\delta}^{\lambda_r^h}$ for $n\geq (\lambda_r^h+1)(h+1)^2$ and $h\geq \delta$. For the case of $h<\delta$, we will consider the following question in the further work.

\begin{prob}\label{prob2}
What are the corresponding extremal graphs in $\mathcal{B}_{n,\delta}^{\lambda_r^h}$ with the maximum spectral radius of order $n\geq (\lambda_r^h+1)(h+1)^2$ and $h<\delta$?
\end{prob}

\section{ Acknowledgement}
This work is supported by the National Natural Science Foundation of China (Nos.12271162, 12326372, 12361071), and Natural Science Foundation of Shanghai (Nos. 22ZR1416300 and 23JC1401500), and The Program for Professor of Special Appointment (Eastern Scholar) at Shanghai Institutions of Higher Learning (No. TP2022031) and  Excellent Doctor Innovation program of Xinjiang University (No. XJU2024BS043).



\end{document}